\documentclass[11pt]{amsart}

\usepackage{graphicx}
\usepackage{alltt}

\usepackage{graphics,color}

\usepackage{psfrag}

\newcommand{\Isom}{{\operatorname {Isom}}}
\newcommand{\QC}{{\bf {QC}}}
\newcommand{\QI}{{\bf {QI}}}

\newcommand{\D} {\mathbb {D}}
\newcommand{\Ha}{{\mathbb {H}}^{2}}
\newcommand{\Ho}{ {\mathbb {H}}^{3}}
\newcommand{\Hnn}{ {\mathbb {H}}^{n}}
\newcommand{\Bnn}{ {\mathbb {B}}^{3}}
\newcommand{\R} {\mathbb {R} }
\newcommand{\Z} {\mathbb {Z}}
\newcommand{\N}{\mathbb {N}}
\newcommand{\C} {\mathbb {C}}

\newcommand{\Sp} {\mathbb {S}}
\newcommand{\Hoo} {\mathbf {H}^3}
\newcommand{\Lin} {\mathcal{L}(\R^2)}

\newcommand{\pt} {\partial}

\newcommand{\lap} {\Delta}
\newcommand{\good} {\mathcal{G}}
\newcommand{\Ext} {\mathcal{E}}
\newcommand{\dist} {\mathbf{K}}
\newcommand{\dista} {\mathbf{d}}
\newcommand{\ener} {\mathbf{e}}
\newcommand{\Avr} {\mathbf{A}}
\newcommand{\green} {\mathbf{g}}

\newcommand{\const} {\operatorname{const}}

\newtheorem{theorem}{Theorem}[section]
\newtheorem{definition}{Definition}[section]
\newtheorem{lemma}{Lemma}[section]
\newtheorem{conjecture}{Conjecture}

\theoremstyle{remark}
\newtheorem*{remark}{Remark}

\begin{document}
\title[Harmonic maps between hyperbolic spaces ] {Harmonic maps between 3-dimensional hyperbolic spaces}
\author[Vlad Markovic]{Vladimir Markovic}

\address{\newline Department of Mathematics   \newline California Institute of Technology   \newline Pasadena, CA 91125,  USA}
\email{markovic@caltech.edu}

\today

\subjclass[2000]{Primary 53C43}

\begin{abstract}  We prove that a quasiconformal map of the sphere $\Sp^{2}$ admits  a harmonic quasi-isometric extension to the hyperbolic  space $\Ho$, 
thus confirming the well known Schoen Conjecture in dimension 3. 
\end{abstract}

\maketitle

\let\johnny\thefootnote
\renewcommand{\thefootnote}{}

\footnotetext{Vladimir Markovic is supported by the NSF grant number DMS-1201463}
\let\thefootnote\johnny

\section{Introduction} 
\subsection{The Schoen Conjecture and the statements of the results}  
One of the main questions in the theory of harmonic maps  is when the homotopy
class of a map $F:M \to N$ between two negatively curved Riemannian manifolds
contains a harmonic map. When $F$ has finite total energy the theory is well
developed and the existence and uniqueness of the corresponding harmonic map has been 
established  (see the book \cite{s-y} by Schoen and Yau and  the article by Hamilton \cite{hamilton}).

In the case when $F$ does not have finite total energy much less is know. The case that has  mostly been studied  
is when $M$ and $N$ are both equal to the hyperbolic spaces $\Hnn$, $n \ge 2$, with the corresponding hyperbolic metrics. Here, we begin with a continuous  
map $f:\Sp^{n-1} \to \Sp^{n-1}$ and the question is whether $f$ can be (continuously) extended to a harmonic map
$H:\Hnn \to \Hnn$ (we make the usual identification $\partial{\Hnn} \equiv \Sp^{n-1}$). 

If no further assumptions on $f$ and  $H$ are imposed then this problem is too general.

\begin{remark} The corresponding Dirichlet problem for functions has been solved in the 80's by Anderson \cite{anderson} and Sullivan \cite{sullivan}. They proved that every  continuous map  $f:\Sp^{n-1} \to \R$ can be extended to a harmonic function
$H:\Hnn \to \R$. 
\end{remark}

The right level of generality was formulated by Schoen who posed the following conjecture.

\begin{conjecture}[Schoen, Li-Wang] Suppose that  $f:\Sp^{n-1} \to \Sp^{n-1}$ is a quasiconformal map. Then there exists a unique harmonic and quasi-isometric map $H:\Hnn \to \Hnn$ that extends $f$. 
\end{conjecture}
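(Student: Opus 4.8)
I would prove the two assertions by quite different means. Uniqueness will follow from a standard convexity property of the distance between harmonic maps together with the maximum principle on $\Ho$. Existence I would obtain by exhausting $\Ho$ by balls, solving the Dirichlet problem for harmonic maps on each, and passing to the limit; the sole non-routine ingredient is an a priori estimate, bounding the constructed maps in terms of the quasiconformal constant $K$ of $f$ alone.

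\emph{Uniqueness.} If $H_{1},H_{2}\colon\Ho\to\Ho$ are harmonic quasi-isometries inducing the same boundary map $f$, then by Sampson's formula $x\mapsto\dista(H_{1}(x),H_{2}(x))^{2}$ is subharmonic on $\Ho$. It is bounded, since each $H_{i}$ lies within bounded Hausdorff distance of any fixed quasi-isometric extension of $f$ — for instance the conformally natural (Douady--Earle) extension $\Ext(f)$ — and it tends to $0$ at every point of $\partial\Ho=\Sp^{2}$ because $H_{1}$ and $H_{2}$ share the boundary value $f$. The maximum principle then forces $H_{1}\equiv H_{2}$; this is the Li--Wang argument in the present setting.

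\emph{Existence.} Fix a $K$-quasiconformal $f\colon\Sp^{2}\to\Sp^{2}$ and let $F=\Ext(f)$, a smooth quasi-isometry whose quasi-isometry constants and $C^{1}$-bounds depend only on $K$. On each ball $B_{R}=B_{R}(o)\subset\Ho$ solve the Dirichlet problem for harmonic maps with boundary data $F|_{\partial B_{R}}$ (classical, as the target is Cartan--Hadamard), obtaining harmonic $H_{R}\colon\overline{B_{R}}\to\Ho$. Everything reduces to the uniform bound
\[
\sup_{x\in B_{R}}\dista\bigl(H_{R}(x),F(x)\bigr)\ \le\ \const(K),\qquad\text{independently of }R .
\]
Granting it, Cheng's interior gradient estimate (valid since $H_{R}$ has bounded oscillation on unit balls) controls $\ener(H_{R})$ locally uniformly, elliptic bootstrapping gives locally uniform $C^{k}$-bounds, and a diagonal subsequence converges in $C^{\infty}_{loc}$ to a harmonic $H\colon\Ho\to\Ho$ with $\dista(H,F)\le\const(K)$ everywhere. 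As a bounded perturbation of the quasi-isometry $F$, the map $H$ is again a quasi-isometry with $K$-controlled constants, and $\partial H=\partial F=f$; this gives existence. (The same estimate also feeds the alternative deformation argument: join $f$ to the identity through $f_{t}$ with $\mu(f_{t})=t\,\mu(f)$ and prove that $\{\,t:\ f_{t}\text{ has a harmonic quasi-isometric extension}\,\}$ is open — the Jacobi operator of a bi-Lipschitz harmonic map into a negatively curved space is coercive, hence invertible on bounded H\"older vector fields — and closed, by the estimate.)

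\emph{The main obstacle.} The crux is the displayed a priori bound, and this is exactly where dimension $3$ parts company with the surface case, where one can exploit the rigidity of the Hopf differential. I would attack it through the Bochner--Eells--Sampson machinery: for the singular values $\lambda_{1}\le\lambda_{2}\le\lambda_{3}$ of $dH_{R}$ with respect to the two hyperbolic metrics one derives differential inequalities for $\log\lambda_{i}$ and for $\ener(H_{R})=\tfrac12(\lambda_{1}^{2}+\lambda_{2}^{2}+\lambda_{3}^{2})$, with signs forced by the negative curvature of domain and target, while comparison with $F$ is governed by $\lap\,\dista(H_{R},F)^{2}\ge-(\text{error term involving }\tau(F))$. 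The real work is to turn quasiconformality of $f$ — an estimate on conformal moduli of curve families on $\Sp^{2}$, hence on the coarse geometry of $H_{R}$ near $\partial\Ho$ — into a maximum-principle argument whose only boundary input is the distortion $K$, giving pointwise two-sided control of the $\lambda_{i}$ and of $\dista(H_{R},F)$. Crucially this must not covertly presuppose a bound on the quasi-isometry constants of $H_{R}$, on pain of circularity; isolating a convexity or monotonicity robust enough to supply that genuine $K$-only dependence, uniformly as $R\to\infty$, is the heart of the matter.
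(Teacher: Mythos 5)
Your outline correctly isolates the crux --- a $K$-only a priori bound on the distance from the approximating harmonic maps to a fixed quasi-isometric extension --- but it does not prove it, and that bound is precisely the content of the paper, so as it stands the proposal is a program rather than a proof. The paragraph headed ``The main obstacle'' invokes the Bochner--Eells--Sampson inequalities and quasiconformality of $f$ and then states that ``isolating a convexity or monotonicity robust enough'' is ``the heart of the matter''; no such quantity is produced. The paper's actual mechanism is quite different from a Bochner argument on the singular values of $dH$: it builds a specific extension $\good(f)$ (a Gauss--Weierstrass variant of Beurling--Ahlfors) which, by a.e.\ differentiability of quasiconformal maps of $\Sp^2$, is \emph{asymptotically harmonic at a random point}; it then combines the Schoen--Yau lower bound for $\lap\dista^2$ with the hyperbolic Green identity on spheres centered at a point where $\dista=d_{\Ho}(\good(f),H(f))$ is within $D(K)$ of its supremum, and derives $\|\dista(f)\|\le D_1(K)$ from the divergence of $\int\green_r\,d\lambda$. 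Nothing in your sketch supplies the ``close to harmonic at a random point'' input, which is where quasiconformal rigidity actually enters.

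There is a second, more structural problem: your existence scheme runs the estimate on the ball-approximants $H_R$ (harmonic on $B_R$ with boundary data $F|_{\partial B_R}$), and the paper explicitly warns that its method does not apply there. The Green-identity argument requires a hyperbolic ball of radius $\to\infty$ centered at a near-maximum of the distance function, and for $H_R$ that near-maximum may sit next to $\partial B_R$, where no such ball fits inside the domain of harmonicity. The paper sidesteps this by proving the a priori bound only for \emph{globally defined} harmonic quasi-isometric extensions (Theorem \ref{thm-main-1}) and then obtaining existence not by exhaustion of the domain but by approximating $f$ in $\QC(\Sp^2)$ by uniformly quasiconformal diffeomorphisms $f_N$, each of which already has a global harmonic quasi-isometric extension by Li--Tam; Theorem \ref{thm-main-1} makes these extensions uniformly $(L(K),A(K))$-quasi-isometric, and Cheng's lemma plus Arzel\`a--Ascoli gives the limit. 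So either you must prove your displayed estimate for the $H_R$ by a method immune to the boundary-localization problem (which the paper could not do), or you must switch to approximating the boundary map rather than exhausting the domain --- and in dimension $3$ the density of diffeomorphisms among quasiconformal maps, which makes the latter work, is itself a nontrivial fact that your proposal never invokes. The uniqueness paragraph is fine and matches the Li--Wang argument the paper cites.
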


Schoen posed this conjecture in \cite{schoen} for $n=2$, but this was soon appropriately generalized \cite{l-w} to all hyperbolic spaces by Li and Wang. The uniqueness part  this conjecture was established by Li and Tam \cite{l-t-2} for $n=2$ and by Li and Wang \cite{l-w} for all $n$.

The existence part of the conjecture  has been much studied. It was shown by Li and Tam \cite{l-t-1} that if the boundary map 
$f:\Sp^{n-1} \to \Sp^{n-1}$ is a $C^1$ diffeomorphism then  $f$ admits a harmonic quasi-isometric extension to $\Hnn$ (every $C^1$ diffeomorphism of $\Sp^{n-1}$ is quasiconformal). The authors show that  one can use the heat flow method to prove the existence of harmonic maps in this case. Another important partial result was proved by Tam and Wan \cite{t-w},  and independently by Hardt and Wolf \cite{h-w}, and it says that if $f:\Sp^{n-1} \to \Sp^{n-1}$ admits a harmonic quasi-isometric extension that is also a quasiconformal self-map of $\Hnn$ then a small perturbation of $f$ admits a harmonic quasi-isometric extension too. 

We also mention the recent existence type result by Bonsante and Schlenker \cite{b-s} in which they prove that every quasisymmetric map $f:\Sp^1 \to \Sp^1$ has a unique  minimal Lagrangian quasiconformal extension to the hyperbolic disc $\Ha$ (minimal Lagrangian maps are close relatives of harmonic maps).

\begin{remark}
If we write $C=C(K_1,K_2,...)$, we mean that the  constant $C$ depends only on $K_1,K_2,...$. We use this policy throughout the paper.
\end{remark}

The main difficulty in working with harmonic maps between hyperbolic spaces is to control the harmonic map $H:\Hnn \to \Hnn$ inside the hyperbolic space in terms of the regularity of the boundary map  $f:\Sp^{n-1} \to \Sp^{n-1}$. To illustrate the subtlety of this issue we show in the next subsection that it is easy to construct a sequence of diffeomorhisms $f_n:\Sp^1 \to \Sp^1$, that converge to the identity in the $C^0$ sense, but such that the corresponding harmonic extensions degenerate on  compact sets in $\Ha$ and we can not extract any sort of limit (this behavior is very different for the harmonic functions problem we mentioned before that was solved by Anderson and Sullivan, where the boundary map effectively controls the behavior of the harmonic function inside the disc).

In the remainder of this paper we prove the following theorem  (see  the next section for the definition of a $(L,A)$-quasi-isometry) which takes care of the main difficulty we described above. 

\begin{theorem}\label{thm-main-1}  There exist  constants $L=L(K)>0$ and $A=A(K)\ge 0$, such that if  a $K$-quasiconformal map $f:\Sp^{2}\to \Sp^{2}$ has a harmonic quasi-isometric extension $H(f):\Ho \to \Ho$, then $H(f)$ is a $(L,A)$-quasi-isometry.
\end{theorem}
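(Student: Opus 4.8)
Since $\PSLC=\Isom^{+}(\Ho)$ acts transitively on $\Ho$ and restricts to M\"obius (hence $1$-quasiconformal) self-maps of $\Sp^{2}$, replacing $H(f)$ by $\gamma\circ H(f)\circ\gamma'$ with $\gamma,\gamma'\in\PSLC$ changes neither $K$ nor the hypotheses, so it is enough to bound $dH(f)$ at one fixed basepoint $O\in\Ho$. Thus the theorem reduces to two a priori estimates with constants depending only on $K$: \textbf{(A)} $H(f)$ is $L(K)$-Lipschitz, and \textbf{(B)} a coarse lower bound $d_{\Ho}(H(f)(x),H(f)(y))\ge d_{\Ho}(x,y)/L(K)-A(K)$. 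Granting (A) and (B), $H(f)$ is an $(L(K),A(K))$-quasi-isometry, since it is surjective (it extends the homeomorphism $f$ and is proper). Note that the qualitative assumption ``$H(f)$ is a quasi-isometry'' is used below only to guarantee that $H(f)$ extends continuously to the homeomorphism $f$ of $\Sp^{2}$ and is proper; its (uncontrolled) constants are never used.

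For \textbf{(A)} I would invoke the interior gradient estimate of Cheng for harmonic maps into nonpositively curved targets, in the form $\|dH(f)(O)\|\le C\bigl(1+\operatorname{osc}_{B(O,1)}H(f)\bigr)$, which reduces (A) to the unit-scale oscillation bound $d_{\Ho}(H(f)(x),H(f)(y))\le C(K)$ whenever $d_{\Ho}(x,y)\le 1$. Both this oscillation bound and the lower bound (B) I would extract from the convex geometry of the target: if $\Gamma$ is a geodesic of $\Ho$ then $z\mapsto d_{\Ho}(H(f)(z),\Gamma)$ is subharmonic (distance to a convex set is convex, and a convex function of a harmonic map into an $\mathrm{NPC}$ manifold is subharmonic), so the maximum principle applied on regions of $\Ho$ bounded by horospheres — with $\Gamma=[f(\xi),f(\eta)]$ the geodesic joining the images of the endpoints $\xi,\eta$ of the geodesic through $x,y$ — confines $H(f)$ near $\Gamma$ on the relevant segment. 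The quantitative input that keeps all constants depending only on $K$ is the $K$-quasiconformal distortion of $f$: it controls the visual separation of $f(\xi),f(\eta)$ and the sizes of shadows on $\Sp^{2}$, hence the position of $\Gamma$ relative to $O$.

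The analytic engine behind these estimates, and the step I expect to be the main obstacle, is a uniform bound on the energy over unit balls,
\[
\int_{B(O,1)}\ener(H(f))\,dV\ \le\ C(K).
\]
I would prove this by integrating the harmonic map equation (vanishing tension field) against a test function built from the Green's function $\green$ of $\Ho$, rewriting the left-hand side as an integral over a family of concentric horospherical shells, and then decomposing $\Sp^{2}$ into a ``good'' set $\good$ on which $f$ has tame distortion at the relevant scales and an ``exceptional'' set $\Ext$ whose measure is controlled purely in terms of $K$; averaging the shell integrals, using the $K$-quasiconformal inequalities on $\good$, and crudely dominating the contribution of $\Ext$ should yield the bound. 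With this in hand Cheng's inequality upgrades to the unconditional Lipschitz bound (A), and the maximum-principle arguments above become quantitative; in particular one excludes that $H(f)$ collapses a macroscopic ball, since by convexity that would force $f$ to concentrate near a round circle, contradicting that $f$ is a quasiconformal self-map of all of $\Sp^{2}$, and this is what drives (B).

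The decisive difficulty is exactly the one flagged in the introduction: controlling the harmonic map inside $\Ho$ by the regularity of $f$ on $\Sp^{2}$. The degenerating-sequence example there shows that no soft compactness argument can produce the energy estimate, so the good/exceptional decomposition and the shell-averaging — which genuinely exploit quasiconformality rather than mere continuity — must carry the weight; a secondary but pervasive technical point is to make sure that none of the auxiliary arguments silently reintroduces a dependence on the a priori quasi-isometry constants of $H(f)$.
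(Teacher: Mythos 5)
Your outline and the paper's proof share only the ambient philosophy (exploit quasiconformality quantitatively, use the Green function); the actual mechanisms are different, and the step you yourself flag as ``the main obstacle'' is a genuine gap rather than a technicality. The paper never attempts a direct energy or gradient estimate for $H(f)$. Instead it constructs an explicit non-harmonic extension $\good(f)$ (a Gauss--Weierstrass/Ahlfors--Beurling-type convolution) which is already an $(L(K),A(K))$-quasi-isometry with uniformly bounded tension field, and which is close to being harmonic, locally quasiconformal, and of energy $>1$ at $\sigma$-almost every boundary direction (Theorem \ref{thm-extension}, using a.e.\ differentiability of quasiconformal maps of $\Sp^2$). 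The whole proof then consists of bounding $\dista(f)=d_{\Ho}(\good(f),H(f))$ by a constant $D_1(K)$: one normalizes so that the supremum of $\dista(f)$ is nearly attained at $0$, applies the Green identity (\ref{eq-green}) to $\dista^2(f)$, and plays the Schoen--Yau lower bound for $\lap\dista^2(f)$ (Lemma \ref{lemma-distance}) on the good set against the trivial lower bound $-2T\|\dista(f)\|$ elsewhere; a compactness/continuity argument (Lemma \ref{lemma-pomocna}) makes the Green-function integral of the resulting step function large uniformly over all $K$-quasiconformal $f$. Once $\|\dista(f)\|\le D_1(K)$, the quasi-isometry constants of $H(f)$ follow for free from those of $\good(f)$ --- both the upper and the lower bound at once.

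Your proposal is missing exactly this comparison map, and I do not see how the program closes without it. Concretely: (i) the proposed energy bound $\int_{B(O,1)}\ener(H(f))\,dV\le C(K)$ is asserted to follow from ``integrating the harmonic map equation against the Green's function,'' but $\tau(H)=0$ is a vector-valued equation; pairing it with a scalar test function yields no scalar energy identity controlled by boundary data, and the Bochner/Schoen--Yau machinery produces Laplacian lower bounds only for scalar quantities built from $H$ --- such as $\ener(H)$, or $d_{\Ho}(H,G)^2$ for a second map $G$ --- and it is only the second kind, with $G$ a controlled quasi-isometry, that converts into the estimate you want. The degenerating example in the introduction shows that nothing softer can work. (ii) Your lower bound (B) is not substantiated: subharmonicity of $z\mapsto d_{\Ho}(H(f)(z),\Gamma)$ confines the image near geodesics (an upper bound) but gives no obstruction to $H(f)$ collapsing a macroscopic ball, and the claim that such collapse ``would force $f$ to concentrate near a round circle'' is not an argument. (iii) The reduction to a single basepoint also needs care for (B), since a lower quasi-isometry bound is not a pointwise derivative bound. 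None of these is a fatal misconception, but each is resolved in the paper only because $H(f)$ is never studied in isolation: every estimate is an estimate on its distance to $\good(f)$.
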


\begin{remark} One of the reasons behind the proof of Theorem \ref{thm-main-1} is the quasiconformal rigidity (in the sense of Mostow) that holds in higher dimensions. One way of expressing this is that every quasiconformal map of $\Sp^{n-1}$, $n \ge 3$, is differentiable almost everywhere (and the derivative has maximal rank).  It is very likely that the previous theorem can be extended to higher dimensional hyperbolic spaces. However, our method does not appear to tell us much in the case $n=2$, as quasiconformal maps of $\Sp^1$ (also known as quasisymmetric) may not be differentiable anywhere and are not rigid in this sense (one may be able to prove a similar result for bi-Lipschitz maps $f:\Sp^1\to \Sp^1$).
\end{remark}

What Theorem \ref{thm-main-1} theorem shows is that the set of $K$-qc maps of $\Sp^{2}$ that admit harmonic quasi-isometric extensions is  closed with respect to pointwise convergence. That is,  if $f_N$ is a sequence of $K$-quasiconformal maps that admit harmonic and quasi-isometric extensions, and if $f_N$ pointwise converges to (a necessarily) $K$-quasiconformal map $f$, then $f$ also admits a harmonic quasi-isometric extension (this is a standard argument and it  follows from Cheng's lemma \cite{cheng} and the Azrela-Ascoli theorem).

The following theorem is   a corollary of Theorem \ref{thm-main-1} and the result of Li and Tam \cite{l-t-1} that every diffeomorphism of $\Sp^{2}$ admits a harmonic quasi-isometric extension. It is a  fact that  every quasiconformal map of the 2-sphere is a limit of uniformly quasiconformal diffeomorphisms.

\begin{theorem}\label{thm-main-2} Every quasiconformal map of $\Sp^2$ admits a harmonic quasi-isometric extension. 
\end{theorem}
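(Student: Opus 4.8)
The final statement, Theorem~\ref{thm-main-2}, asserts that every quasiconformal map $f:\Sp^2\to\Sp^2$ admits a harmonic quasi-isometric extension to $\Ho$.  I would derive this purely from the machinery already in hand: Theorem~\ref{thm-main-1}, the Li--Tam existence result for $C^1$ diffeomorphisms, the closure-under-pointwise-limits remark, and the classical density of uniformly quasiconformal diffeomorphisms.  The strategy is a three-step approximation argument: approximate $f$ by nice maps, extend each of those by Li--Tam, get uniform geometric control from Theorem~\ref{thm-main-1}, and pass to the limit.

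\textbf{Step 1 (approximation).}  Given a $K$-quasiconformal $f:\Sp^2\to\Sp^2$, invoke the fact (cited in the excerpt just before the theorem) that $f$ is a pointwise limit of a sequence $f_N:\Sp^2\to\Sp^2$ of diffeomorphisms that are uniformly $K'$-quasiconformal for some $K'=K'(K)$.  Concretely one may mollify $f$ and its inverse, or use the standard fact that smooth uniformly-qc maps are dense in the qc group of $\Sp^2$; the key point is that the dilatation bound does not blow up along the sequence.

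\textbf{Step 2 (extend and control).}  Each $f_N$, being a diffeomorphism of $\Sp^2$, admits a harmonic quasi-isometric extension $H_N:=H(f_N):\Ho\to\Ho$ by Li--Tam~\cite{l-t-1}.  Since $f_N$ is $K'$-quasiconformal with $K'$ independent of $N$, Theorem~\ref{thm-main-1} applies and gives that each $H_N$ is an $(L,A)$-quasi-isometry with $L=L(K')$ and $A=A(K')$ uniform in $N$.  Normalizing the maps (post-composing by a Möbius transformation so that, say, the images of three fixed boundary points are pinned down, or using that $H_N$ moves a fixed interior point a bounded amount once the boundary values are controlled near three points) prevents the family from escaping to infinity.

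\textbf{Step 3 (limit).}  This is exactly the situation described in the paragraph after Theorem~\ref{thm-main-1}: a sequence of $K'$-quasiconformal maps admitting harmonic quasi-isometric extensions, converging pointwise to the $K$-quasiconformal map $f$.  By that remark — which rests on Cheng's interior gradient estimate~\cite{cheng} for harmonic maps into nonpositively curved targets together with Arzel\`a--Ascoli — the normalized maps $H_N$ subconverge locally uniformly (in $C^\infty_{loc}$, using elliptic regularity for the harmonic map system) to a map $H:\Ho\to\Ho$.  The limit $H$ is harmonic (harmonicity is a closed condition under $C^2_{loc}$ convergence), it is an $(L,A)$-quasi-isometry (the quasi-isometry inequalities pass to the limit with the same uniform constants $L(K'),A(K')$), and its boundary values are $f$ (a quasi-isometry extends continuously to the boundary, and pointwise convergence of the $f_N$ to $f$ identifies the boundary map).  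Hence $H$ is the desired harmonic quasi-isometric extension of $f$.

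\textbf{Main obstacle.}  Almost all the analytic difficulty has been absorbed into Theorem~\ref{thm-main-1}, so the residual work is bookkeeping: making the approximating sequence genuinely \emph{uniformly} quasiconformal (rather than merely quasiconformal with possibly growing constants) and ensuring the normalization is compatible with pointwise boundary convergence so that no mass escapes to $\partial\Ho$ in the limit.  I expect the uniform-qc density statement for diffeomorphisms of $\Sp^2$ — and pinning down the correct citation or short argument for it — to be the one genuinely load-bearing external input; everything else is a routine compactness-and-limit argument of the type already sketched in the excerpt.
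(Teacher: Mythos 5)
Your proposal is correct and follows exactly the route the paper takes: approximate $f$ by uniformly quasiconformal diffeomorphisms, extend each by Li--Tam, use Theorem \ref{thm-main-1} for uniform $(L,A)$ control, and pass to a limit via Cheng's lemma and Arzel\`a--Ascoli. The paper presents this as a short corollary rather than a detailed proof, and your three steps fill in the same outline it sketches.
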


As we said, every quasiconformal map of the 2-sphere is a limit of uniformly quasiconformal diffeomorphisms. Whether such approximation result holds in higher dimensions is a hard open problem. In dimension 4, it is related to the question of whether a 4-dimensional quasiconformal manifold carries a differentiable structure. Thus, extending Theorem \ref{thm-main-1} to higher dimensions would not directly imply the generalization of Theorem \ref{thm-main-2}  to higher dimensions (below we briefly discuss a somewhat different approach, based on the same circle of ideas, that may lead to proving Theorem \ref{thm-main-2} to all dimensions $n \ge 3$).

\subsection{An interesting sequence of harmonic maps on the unit disc} As promised above, we construct a sequence of diffeomorhisms $f_n:\Sp^1 \to \Sp^1$, that converge to the identity in the $C^0$ sense, but such that the corresponding harmonic extensions degenerate on  compact sets in $\Ha$ and we can not extract any sort of limit. 

We construct harmonic maps by prescribing their Hopf's differentials (see \cite{wan}, \cite{t-w-0}). Let $\D=\{z \in \C: \, |z|<1\}$ denote the unit disc and let $\rho(z)$ denote the density of the hyperbolic metric on $\D$. 
The Hopf differential of a harmonic map $f:\D \to \D$ is given by 
$$
\text{Hopf}(f)=\rho^{2}(f)(\partial{f}) (\overline{\partial}f) \, dz^{2}. 
$$
A harmonic map on $\D$ is uniquely determined (up to the post-composition by a M\"obius transformation) by its Hopf differential. If the Hopf differential is smooth up to the boundary then the corresponding harmonic diffeomorphism is  smooth up to the boundary  (see \cite{l-t-1}).

Set 
$$
\varphi_n(z)=2^{n-2} z^{n-2}, \,\, z \in \D,
$$
for $n \in \N$, and denote by $f_n:\D \to \D$ the harmonic map whose Hopf differential is equal to $\varphi_n\,dz^2$ and normalized so that $f_n(0)=0$ and $f_n(1)=1$ (here $0 \in \D$ and $1 \in S^1=\partial \D$).

Define the rotation $R_n(z)$ by
$$
R_n(z)=e^{\frac{2\pi i}{n}} z, \,\, z \in \D,
$$
and observe that
$$
\big( \varphi_n \circ R_n \big)(R'_n)^{2}=\varphi_n.
$$
This implies that the maps $f_n$ and $f_n \circ R_n$ have the same Hopf differential which yields that $f_n \circ R_n=A_n \circ f_n$, for some M\"obius transformation of $\D$.
Since both $f_n$ and $f_n \circ R_n$ fix the origin $0$, we conclude that $A_n$ is a rotation (fixing the origin). Iterating the identity $f_n \circ R_n=A_n \circ f_n$ we obtain the identities
$$
f_n \circ R^{k}_n=A^{k}_n \circ f_n,
$$
for any $k \in \Z$. Letting $k=n$ shows that $A^{n}_n$ is the identity and we conclude that $A_n=R_n$.
 
It follows from the identities 
$$
f_n \circ R_n=R_n \circ f_n,
$$
and $f_n(1)=1$ that $f_n$ converges to the identity map on $S^1$ when $n \to \infty$. On the other hand, the sequence $\varphi_n\, dz^2$ of the corresponding Hopf differentials does not have a limit (it blows up on the annulus $1/2<|z|<1$) and therefore the sequence $f_n$ does not have a limit.

\subsection{The main ideas} All notation we introduce here will be defined in more details later. 
We identify the hyperbolic space $\Ho$ with the unit ball model $\Bnn \subset \R^3$. By $\QC(\Sp^2)$ we denote  normalized (fixing the same three distinct points) quasiconformal maps of $\Sp^2$ and by $\QI(\Bnn)\equiv \QI(\Ho)$ the space of quasi-isometries of $\Ho$.

Each point $x\in \Bnn$ can be written in the polar coordinates $x=\rho \zeta$, where $\zeta \in \Sp^2$ and $\rho=|x|$. By $\sigma$ we denote the probability Lebesgue measure on $\Sp^{2}$ and by $\lambda$  the measure on $\Bnn$ such that $d\lambda$ is the volume element with respect to the hyperbolic metric on $\Bnn$. 

For a continuous function $F$ on $\Bnn$, we let 
$$
\Avr_{F}(\rho)=\int\limits_{\Sp^{2} } F(\rho \zeta) \, d\sigma(\zeta).
$$
For a $C^2$ function $F$ on $\Bnn$, the following corollary of the Green's Identity holds true
\begin{equation}\label{out-1}
F(0)+ \int\limits_{\Bnn}  \green_r(x) \lap F(x) \, d\lambda(x)=\Avr_F(r),\,\, 0\le r<1,
\end{equation}
where $\lap F$ is the Laplacian of $F$ computed with respect to the hyperbolic metric, and $\green_r$ is the hyperbolic Green function for the ball $r\Bnn$ (the Green function is defined on the product $r\Bnn \times r\Bnn$ but here we abuse notation slightly but letting $\green_r(x)=\green_r(x,0)$).

The proof of Theorem \ref{thm-main-1} is based on the following three main ingredients:

\begin{itemize}

\item For every $f \in \QC(\Sp^2)$, there exists a ``good" extension $\good:\QC(\Sp^2) \to \big(C^2(\Bnn) \cap \QI(\Bnn) \big)$. Among other things, the ``good" extension  $\good(f)$ is ``close" to being harmonic at a random point of $\Bnn$ (by ``close" to being harmonic we mean that $\good(f)$ nearly satisfies the equation that defines harmonic maps, see below for the precise statement). We construct such an extension explicitly in the last section of this paper (this extension is very similar to the 
generalized Ahlfors-Beurling extension).

\item The inequality, computed by Schoen and Yau in \cite{s-y-1} (see the paper \cite{j-k} by Jager and Kaul for very similar computations) that gives a lower bound for the Laplacian $\lap \dista^2(f)$, where  
$$
\dista(f)(x)=d_{\Ho}(\good(f)(x),H(f)(x)), 
$$
and $H(f)$ is the harmonic quasi-isometric extension of $f$ (providing it exists). 

\item Although we do not construct $\good$ to be conformally natural, after replacing   $f$ by $I\circ f \circ  J$ ($I,J \in \Isom(\Ho)$) if necessary, we may assume that $\dista(f)(0) \ge ||\dista(f)||-D$, for some constant $D=D(K)$ that depends only on $K$ (here $||\dista(f)||$ is the supremum of $\dista(f)(x)$, for $x \in \Bnn$).  Applying (\ref{out-1})  to $\dista^2(f)$, we get the main estimate
\begin{align}\label{out-2}
||\dista(f)||^{2}-D' ||\dista(f)||-D'' + &\int\limits_{\Bnn} \green_r(x) \lap \dista^2(f)(x)\, d\lambda(x) \le \\
\nonumber &\le \Avr_{\dista^{2}(f)}(r) \le ||\dista(f)||^{2}.
\end{align}
\end{itemize}
To prove Theorem \ref{thm-main-1}, it suffices to prove
\begin{equation}\label{out-3}
||\dista(f)|| \le D_1,
\end{equation}
for some constant $D_1=D_1(K)$ and a $K$-quasiconformal (or just $K$-qc) map $f$.
\vskip .3cm
The crucial estimate (\ref{out-3}) follows from (\ref{out-2}) and the following inequality: There exists $0<r_0=r_0(K,M)<1$ and $D'''=D'''(K,M)$, such that 
$$
\int\limits_{\Bnn} \green_{r_{1}}(x) \lap \dista^2(f)(x)\, d\lambda(x)  \ge (D'+1)||\dista(f)||-D''',
$$
for some $0<r_1\le r_0$, and every $K$-qc map $f$.

The previous inequality will be proved using the properties of the good extension and the lower bounds for  $\lap \dista^2(f)(x)$.  We provide a more detailed outline in the next subsection.

We conclude this part of the introduction with the following three remarks.

\begin{remark} Even if a qc map $f \in \QC(\Sp^2)$ does not have a harmonic quasi-isometric extension, we can always find the map $H_r(f): r\Bnn \to \Bnn$, which is harmonic and it agrees with $\good(f)$ on the sphere $r\Sp^2$. One may ask if  this method can be somehow applied to bound above the distance between $\good(f)$ and $H_r(f)$.  All our estimates are taking place on a  ball in $\Bnn$, that is centered at some point where $\dista(f)$ is close to its supremum, and has a sufficiently large radius (this is at the heart of our argument). But if we consider the distance function $d_{\Ho}(\good(f)(x),H_r(f)(x))$, for $x$ in $r\Bnn$, the point where the distance is close to its maximum may be near the boundary of $r\Bnn$, and thus $r\Bnn$ may not contain the sufficiently large ball we need, so our method can not directly be applied in this situation. 
\end{remark} 
\vskip .3cm
\begin{remark} The existence of a ``good" extension was  used to prove that a diffeomorphism of $\Sp^{n-1}$ admits a harmonic quasi-isometric extension to $\Hnn$.  When $f$ is sufficiently smooth on $\Sp^{n-1}$, then the good extension has the property that the norm $|\tau(\good(f))|$ of its tension field is in $L^2(\Hnn)$. In general, when $f$ is only quasiconformal,  the ``good" extension we construct does not have  strong enough properties and we can not apply the standard heat flow method developed by Li-Tam. However, it is very reasonable to expect that the heat flow whose initial map is a  ``good" extension actually converges, and this author aims to explore this further. If so, this would give a proof of Theorem \ref{thm-main-2} in all dimensions $n\ge 3$.
\end{remark}

\begin{remark}
The reason we can construct the good extension $\good(f)$ is because $f$ is differentiable almost everywhere on $\Sp^2$, which is a manifestation of quasiconformal rigid. This is not true for maps of the unit circle and it is not clear how to construct such an extension in this case (unless perhaps if the boundary map is bi-Lipschitz to begin with). 
\end{remark}

\subsection{A more detailed outline}  Given a map $F \in C^{2}(\Bnn)$, by $\ener(F)(x)$ we denote the energy of $F$ and by $\tau(F)(x)$ the tension field.  Then $F$ 
is said to be harmonic if 
$$
\tau(F)(x)=0,\,\,\, x\in \Ho.
$$

The extension $\good$ is continuous: if $f_n$ is a sequence of $K$-qc maps that pointwise converges to some $f \in \QC(\Sp^{2})$, then $\good(f_n) \to \good(f)$ in $C^{2}(\Bnn)$ (that is, the first and second derivatives of $\good(f_n)$ converge  respectively to the first and second derivatives of  $\good(f)$, uniformly on compact sets in $\Bnn$).  Also, the inequality $||\tau(\good(f)||\le T=T(K)$, holds for every $K$-qc  $f \in \QC(\Sp^2)$.

We explain now what it means that $\good(f)$ is ``close" to being harmonic at a random point. For $\epsilon>0$, $K_1\ge 1$, and $f \in \QC(\Sp^2)$, we define 
$X_f(K_1,\epsilon) \subset \Bnn$ by letting $x \in X_f(K_1,\epsilon)$ if  $|\tau(\good(f))(x)| < \epsilon$, $\dist(\good(f))(x)< K_1$, and $\ener(\good(f))(x)>1$ (here $\dist$ denotes the quasiconformal distortion). 
If we keep $K_1$  fixed and let $\epsilon$ be small, then at such points the map $\good(f)$ is close to being harmonic (and since $\dist(\good(f))(x)< K_1$, the map $\good(f)$ is locally $K_1$-quasiconformal on $X_f$).

\begin{remark} We point out that harmonic maps are  not necessarily everywhere   ``close" to being harmonic in our sense. Although the tension field of a harmonic  map is zero, they do not have to be locally quasiconformal nor their energy has to be greater that 1. 
\end{remark}

Theorem \ref{thm-extension} below  says that for every $\epsilon>0$ and a $K$-qc map $f$, we have 
\begin{equation}\label{out-4}
\lim_{\rho \to 1} \sigma\big( \{\zeta \in \Sp^{2}:\, \rho\zeta \in X_f(2K,\epsilon)\} \big)   \to 1.
\end{equation}
\vskip .3cm

The convergence in (\ref{out-4}) may not be uniform over all $K$-qc mappings, but using the continuity of $\good$ we prove a uniformity statement  (see Lemma \ref{lemma-pomocna} below) that is sufficient for us.

The way we use that $\good(f)$ is close to being harmonic on $X_f(\epsilon)=X_f(2K,\epsilon)$ is as follows. One easily computes $\epsilon_0=\epsilon_0(K)>0$, such that for $X_f=X_f(\epsilon_0)$,  from the lower bound of $\lap \dista^2(f)$ we get

\begin{equation}\label{out-5}
\lap \dista^2(f)(x) \ge c||\dista(f)|| >0,\,\,\ x \in X_f\bigcap \{x \in \Bnn:\, \dista(f)(x)\ge \frac{1}{2} \}, 
\end{equation}
and
\begin{equation}\label{out-6}
\lap \dista^2(f)(x) \ge -T||\dista(f)||,\,\,\, \text{for every}\,\, x \in \Bnn,
\end{equation}
for some constant $c=c(K)>0$, and where $T=T(K)$ was defined above. 

From (\ref{out-6}) and (\ref{out-2}), we bound above the measure of the set 
$$
\sigma\left( \{\zeta \in \Sp^2:\, \dista(f)(r\zeta)  \le \frac{||\dista(f)||}{2} \} \right) \le \frac{\varphi_K(r)}{||\dista(f)||}, 
$$
where $\varphi_K(r)$ is a fixed function of $r \in [0,1)$, for a given $K$ (see Lemma \ref{lemma-measure} below). Using this bound, the estimate (\ref{out-5}), and the uniformity version of (\ref{out-4}), we show (see Lemma \ref{lemma-estimate}) that for every $M>0$, there exists $0<r_0=r_0(K,M)<1$, such that
\begin{equation}\label{out-7}
\int\limits_{\Bnn} \green_{r_{1}}(x) \lap \dista^2(f)(x)\, d\lambda(x) \ge M||\dista(f)||-\psi_K(r_0),
\end{equation}
for some $0<r_1\le r_0$, where $\psi_K(r)$ is a fixed function for a given $K$.  

Let $M=M(K)=D'+1$.  Then by (\ref{out-7}) we have
$$
\int\limits_{\Bnn} \green_{r_{1}}(x) \lap \dista^2(f)(x)\, d\lambda(x) \ge M||\dista(f)||-\psi_K(r_0)=(D'+1)||\dista(f)||-\psi_K(r_0).
$$
Replacing this in (\ref{out-2}), we get
$$
||\dista(f)|| \le \psi_K(r_0)+D''=D_1=D_1(K),
$$
thus proving (\ref{out-3})

\subsection{Organization of the paper}

In Section 2 we recall basic definitions and needed formulas in $\Bnn$. Section 3 is devoted to the definition of the extension $\good$  and its properties. In Section 4 we prove Theorem \ref{thm-main-1} assuming the existence of $\good$. Sections 2,3, and 4 closely  follow the above outline (these sections are really the expanded versions of outline).
  
In the last section we explicitly construct $\good$ by working in the upper-half space model of $\Ho$. The extension $\good$ is a version of the well known Ahlfors-Beurling construction \cite{a-b}.

\subsection{Acknowledgement} I am grateful to the referee for his/her comments and suggestions. Most of this project was carried out while the author was visiting University of Minnesota in Minneapolis as the \textsl{Ordway Visiting Professor}. I wish to thank them for their hospitality.

\section{Preliminaries}

\subsection{Quasi-isometries and quasiconformal maps}  For the relevant background on quasiconformal maps and quasi-isometries of hyperbolic spaces see \cite{pansu}, \cite{l-w}, \cite{t-w}.
Let $F:X \to Y$ be a map between two metric spaces $(X,d_X)$ and $(Y,d_Y)$.  We say that $F$ a $(L,A)$-quasi-isometry 
if there are constants $L>0$ and $A \ge 0$, such that
$$
\frac{1}{L}d_Y(F(x),F(y))-A \le d_X(x,y) \le Ld_Y(F(x),F(y))+A,
$$
for every $x,y \in X$ (some authors call this a rough isometry but we stick to the name commonly used in hyperbolic geometry).  
By $\QI(X,Y)$ we denote the set of all  quasi-isometries  from $X$ to $Y$ and if $X=Y$ then $\QI(X,X)\equiv \QI(X)$.

We define the distortion function $\dist(F)(x)$ by
$$
\dist(F)(x)=\limsup\limits_{t \to 0}  \,   \frac{ \max\limits_{d_{X}(x,y)=t} d_Y(F(x),F(y)) }  {\min\limits_{d_{X}(x,y)=t} d_Y(F(x),F(y)) }.
$$
If $\dist(F)(x) \le K$ on some  set $U \subset X$, we say that $F$ is locally $K$-quasiconformal on $U$. If $F$ is a global homeomorphism and $\dist(F)(x) \le K$ for every $x \in X$, we say that $F$ is $K$-quasiconformal.  

We will be consider quasiconformal maps of $\Sp^2$ and locally quasiconformal maps between  open subsets of $\Ho$.

Each quasi-isometry $F:\Ho \to \Ho$ extends continuously to  $\overline{\Ho}$  and  the restriction of $F$ to $\Sp^{2}$ is  a quasiconformal map. 
Moreover, if $F$ and $G$ are two quasi-isometries of $\Ho$  that have the same boundary values, then the distance $d_{\Ho}(F(x),G(x))$ is bounded on $\Ho$.

\begin{definition} Once and for all we fix three distinct points on $\Sp^2$. A quasiconformal map of $\Sp^2$ is normalized if it fixes these three  points. The set of all normalized quasiconformal maps from $\Sp^2$ to itself is denoted by $\QC(\Sp^2)$.
\end{definition}

\subsection{Tension, energy and the distance} For background on harmonic maps see \cite{s-y}. 
Given Riemannian manifolds $(M,g)$, $(N,h)$ and a $C^2$ map $F:M \to N$, the energy density of $F$ at a point $x \in M$ is defined as
$$
\ener(F) = \frac{1}{2} |dF|^2
$$
where the $|dF|^2$ is the squared norm of the differential of $F$, with respect to the induced metric on the bundle $T^* M \times F^{-1} T N$. It can also  be written as

$$
\ener(F) = \frac{1}{2} \text{trace}_g F^* h.
$$

In local coordinates this  reads as
$$
\ener(F) = \frac{1}{2} g^{ij} h_{\alpha\beta}\frac{\partial{F}^\alpha}{\partial x^i}\frac{\partial{F}^\beta}{\partial x^j}.
$$

The tension field of $F$ is given by 
$$
\tau(F)= \text{trace}_g \nabla dF,
$$
where $\nabla$ is the connection on the vector bundle $T^* M \times F^{-1} T N$  induced by the Levi-Civita connections on M and N. 
\vskip .3cm

We let $\Bnn$ denote the unit ball in $\R^3$. By  $C^{k}(\Bnn,\R^3)$ we denote the topological space of $k$ differentiable mappings from $\Bnn$ into $\R^3$ equipped with the standard $C^{k}$ topology.
By $C^k(\Bnn)\equiv C^k(\Ho)$ we denote the closed subspace of   $C^{k}(\Bnn,\R^3)$ that contains those maps that map $\Bnn$ into itself (here we identify the hyperbolic space $\Ho$ with its unit ball model $\Bnn$) .

As in the above definition, given a map $F \in C^{2}(\Ho)$ by $\ener(F)(x)$ we denote the energy of $F$ at $x \in \Ho$. By $\tau(F)(x)$ we denote its tension field and by $|\tau(F)(x)|$ the norm of the tension field at $x$. Recall that $F$ is a harmonic map if $\tau(F) \equiv 0$. We let $||\tau(F)||=\sup_{x\in \Ho} |\tau(F)(x)|$  and $||\ener(F)||=\sup_{x\in \Ho} |\ener(F)(x)|$ (the tension field and the energy of $F$ are computed with respect to the hyperbolic metric).

\begin{definition} Given two mappings $F, G, \in \C^2(\Ho)$, we define the function $\dista:\Ho \to [0,\infty)$ by 
$$
\dista(x)=d_{\Ho} (F(x),G(x)), \, x \in \Ho.
$$
\end{definition}

Function $\dista^2$ is $C^2$, and Schoen and Yau  computed its Laplacian (see page 368 in \cite{s-y-1}, and also the papers \cite{j-k} and \cite{h-w}  by  Jager-Kaul and Hardt-Wolf for similar computations).  The following  well known inequality is a corollary of the formula from \cite{s-y-1}. It  was stated in many papers (see  \cite{d-w}, \cite{h-w}, \cite{l-t-2}, \cite{yang}, \cite{t-w} ).

Let $u_1,u_2,u_3$ be an orthonormal frame at $x \in \Ho$ and let $v_1,v_2,v_3$ and $w_1,w_2,w_3$ denote the orthonormal frames at the points $F(x)$ and $G(x)$ respectively such that the vectors $v_3$ and $w_3$ are tangent to the geodesic segment  connecting $F(x)$ and $G(x)$ and point away from each other.  Let 
$$
F_*(u_i)=\sum_{j=1}^{3} \alpha_{i}^{j}(x) v_{j},
$$
and
$$
G_*(u_i)=\sum_{j=1}^{3} \beta_{i}^{j}(x) v_{j}.
$$

Denote by $\lap\dista^2(x)$ the Laplacian of $\dista^2(x)$ computed with respect to the hyperbolic metric. Then the inequality 

\begin{align}\label{distance}
\lap \dista^2(x) &\ge -2\dista(x)\big( |\tau(F)(x)|+|\tau(G)(x)| \big) +\\  
\nonumber & + 2\dista(x) \sum_{i=1}^{3}\sum_{j=1}^{2}  \big( (\alpha_{i}^{j}(x))^{2}+(\beta_{i}^{j}(x))^{2} \big) \tanh \frac{\dista(x)}{2},
\end{align}
holds for every  $x \in \Bnn$.

The following lemma is a corollary of (\ref{distance}) and it summarizes exactly how the inequality (\ref{distance})  will be applied in the proof of our main theorem.

\begin{lemma}\label{lemma-distance} If $F,G \in C^{2}(\Ho)$, then 
\begin{equation}\label{distance-1}
\lap \dista^2(x) \ge -2\dista(x)\big( ||\tau(F)||+||\tau(G)|| \big), \, x \in \Ho.
\end{equation}
Moreover, let $K_1 \ge 1$ and let  $U \subset \Ho$ be the set where the inequality $\dist(F)(x)\le K_1$ holds. There exists a constant $q=q(K_1)>0$ such that 
\begin{align}\label{distance-2}
\lap \dista^2(x) \ge & -2\dista(x)\big(|\tau(F)(x)|+|\tau(G)(x)|\big) + \\
\nonumber &+ 2q\dista(x) \ener(F)(x) \tanh \frac{\dista(x)}{2}, \, x \in U.
\end{align}

\end{lemma}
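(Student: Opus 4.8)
The plan is to derive Lemma \ref{lemma-distance} as a direct consequence of the Schoen--Yau inequality (\ref{distance}), by carefully bounding the two sums on the right-hand side.

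\textbf{Proof sketch.} First I would prove (\ref{distance-1}). The second sum in (\ref{distance}), namely $2\dista(x)\sum_{i,j}\big((\alpha_i^j(x))^2+(\beta_i^j(x))^2\big)\tanh\frac{\dista(x)}{2}$, is manifestly nonnegative, since $\dista(x)\ge 0$, $\tanh\frac{\dista(x)}{2}\ge 0$, and each squared coefficient is nonnegative. Dropping it only decreases the right-hand side, so (\ref{distance}) immediately gives
$$
\lap\dista^2(x)\ge -2\dista(x)\big(|\tau(F)(x)|+|\tau(G)(x)|\big).
$$
Then I bound $|\tau(F)(x)|\le \|\tau(F)\|$ and $|\tau(G)(x)|\le \|\tau(G)\|$ by definition of the sup norms, which yields (\ref{distance-1}) for all $x\in\Ho$.

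For (\ref{distance-2}), the point is to keep the positive term but express a portion of it in terms of the energy density $\ener(F)(x)$ rather than the individual frame coefficients. Recall that, with $u_1,u_2,u_3$ an orthonormal frame at $x$ and $F_*(u_i)=\sum_j\alpha_i^j(x)v_j$ in the orthonormal frame at $F(x)$, the energy density is $\ener(F)(x)=\frac12\sum_{i=1}^3\sum_{j=1}^3(\alpha_i^j(x))^2 = \frac12|dF(x)|^2$. So the sum appearing in (\ref{distance}) differs from $2\ener(F)(x)$ only by the missing $j=3$ terms: $\sum_{i=1}^3\sum_{j=1}^2(\alpha_i^j(x))^2 = 2\ener(F)(x)-\sum_{i=1}^3(\alpha_i^3(x))^2$. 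On the set $U$ where $\dist(F)(x)\le K_1$, the differential $dF(x)$ is $K_1$-quasiconformal as a linear map, so its singular values are comparable up to the factor $K_1$; consequently the component of $|dF|^2$ in any single direction cannot dominate, and one gets $\sum_{i=1}^3(\alpha_i^3(x))^2\le (1-q')|dF(x)|^2$ for some $q'=q'(K_1)>0$ bounded away from $0$ (equivalently, $\sum_{i,j\le 2}(\alpha_i^j)^2\ge q'|dF|^2 = 2q'\ener(F)$). Taking $q=q(K_1)=q'$, discarding the (still nonnegative) $\beta$-terms, and substituting into (\ref{distance}) gives
$$
\lap\dista^2(x)\ge -2\dista(x)\big(|\tau(F)(x)|+|\tau(G)(x)|\big)+2q\,\dista(x)\,\ener(F)(x)\tanh\frac{\dista(x)}{2}
$$
for $x\in U$, as claimed.

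\textbf{Main obstacle.} The only nontrivial step is the linear-algebra estimate showing that a $K_1$-quasiconformal linear map cannot concentrate its Hilbert--Schmidt norm in one coordinate direction, i.e.\ producing the explicit $q(K_1)>0$. This requires unwinding the definition of $\dist(F)(x)$ in terms of the ratio of the largest to smallest singular value of the linear map $dF(x):T_x\Ho\to T_{F(x)}\Ho$ (both tangent spaces carry the hyperbolic metric, so this is the Euclidean operator norm ratio in orthonormal frames), and then checking that if $\dist(F)(x)\le K_1$ then in any orthonormal basis of the target the squared length of the projection of the image frame onto a single axis is at most a definite fraction, depending only on $K_1$, of the total $|dF|^2$. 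This is elementary once set up, and everything else is just bookkeeping with (\ref{distance}).
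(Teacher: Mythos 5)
Your proof is correct, and the overall skeleton is the same as the paper's: both parts are read off from the Schoen--Yau inequality (\ref{distance}), with (\ref{distance-1}) obtained by discarding the nonnegative sum and passing to sup norms. Where you diverge is in the one substantive step, the linear-algebra estimate $\sum_{i=1}^{3}\sum_{j=1}^{2}(\alpha_i^j)^2\ge q(K_1)\,\ener(F)$ on $U$. The paper imports this from Tam--Wan, routing through the Jacobian: $\ener(F)\le K_2\,J^{2/3}(F)$ for $K_1$-quasiconformal differentials, and $J^{2/3}(F)\le C\sum_{i,j\le 2}(\alpha_i^j)^2$ for a universal $C$. You instead bound the single missing direction directly: $\sum_{i=1}^{3}(\alpha_i^3)^2$ is the squared norm of one component of $dF$ in the target frame, hence at most $\sigma_1^2$, while $|dF|^2=\sigma_1^2+\sigma_2^2+\sigma_3^2\ge\sigma_1^2(1+2/K_1^2)$ once $\sigma_1\le K_1\sigma_3$; subtracting gives $\sum_{i,j\le 2}(\alpha_i^j)^2\ge \frac{2}{K_1^2+2}\,|dF|^2$, i.e.\ an explicit $q=q(K_1)$. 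Your route is self-contained (no citation needed), avoids the Jacobian entirely, and produces an explicit constant; the paper's route has the mild advantage of quoting a ready-made reference. One small point worth making explicit if you write this up: the identification of $\dist(F)(x)$ with the singular-value ratio of $dF(x)$ requires $dF(x)$ to be nondegenerate, but if $dF(x)=0$ then $\ener(F)(x)=0$ and (\ref{distance-2}) degenerates to a consequence of (\ref{distance}) anyway, so nothing is lost.
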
 

\begin{proof}
The inequality (\ref{distance-1}) is an immediate corollary of (\ref{distance}). To prove the inequality (\ref{distance-2}) we need to show 
$$
\sum_{i=1}^{3}\sum_{j=1}^{2}  \big( (\alpha_{i}^{j}(x))^{2}+(\beta_{i}^{j}(x))^{2} \big) \ge q \ener(F)(x) , \, x \in U.
$$
This was proved by Tam and Wan (see page 12 in \cite{t-w}) and we outline their argument.   By elementary linear algebra there exists a constant $K_2=K_2(K_1)$ such that if $\dist(F)(x)\le K_1$ then
$$
\ener(F)\le K_2 J^{\frac{2}{3}}(F)(x),
$$
where $J(F)$ is  the Jacobian of $F$. Then (again as an exercise in linear algebra) they show that there exists a universal constant $C$  such that 
$$
J^{\frac{2}{3}}(F)(x)\le C \sum_{i=1}^{3}\sum_{j=1}^{2} \big( \alpha_{i}^{j}(x) \big)^{2}.
$$
We let 
$$
q=\frac{1}{C K_2},
$$
and observe that $q$ depends only on $K_1$. Thus, we have the inequality 
$$
q\ener(F)(x)\le  \sum_{i=1}^{3}\sum_{j=1}^{2} \big( \alpha_{i}^{j}(x) \big)^{2},\,\, x \in U,
$$
which together with (\ref{distance}) implies that the inequality (\ref{distance-2}) holds for $x \in U$.
 
\end{proof}

\subsection{The unit ball model of the hyperbolic space} As above, by $\Bnn$ we denote the unit ball in $\R^3$ and by $\Sp^{2}=\pt{\Bnn}$ the unit sphere. Every $x \in \Bnn$ can be written as $x=\rho\zeta$, where $\rho$ is the absolute value of $x$ and $\zeta \in \Sp^{2}$ the corresponding point ($\zeta$ is uniquely determined by $x$ when $|x|\ne 0$). 
All formulas we state below are classical and can be found  in  chapters three and four of \cite{stoll}, chapter five of  \cite{ahlfors}, and chapter four in \cite{rudin}. Our exposition follows the survey article by Stoll \cite{stoll} which in turn very closely follows the exposition in
\cite{pavlovic}.

We let $\sigma$ denote the Lebesgue measure on $\Sp^{2}$ normalized to be a probability measure, that is $\sigma(\Sp^{2})=1$, and  
we let $\mu$ denote the Lebesgue measure on $\R^3$, normalized to be a probability measure on $\Bnn$, that is $\mu(\Bnn)=1$.

By $\lambda$ we denote the measure on $\Bnn$ such that 
$$
d\lambda(x)=\frac{d\mu(x)}{(1-|x|^2)^3}.
$$
Then $d\lambda$ is a constant multiple of the volume element with respect to the hyperbolic metric on $\Bnn$.

In polar coordinates $x=\rho\zeta$, $0\le \rho<1$ and $\zeta \in \Sp^{2}$, we have (taking into account the normalizations we imposed on $\sigma$ and $\mu$)
$$
d\mu(x)=3 \rho^{2} \, d\rho d\sigma(\zeta) 
$$
and thus
$$
d\lambda(x)=\frac{3\rho^{2}\, d\rho d\sigma(\zeta)}{(1-\rho^2)^3}.
$$

\begin{definition}\label{def-average} For a continuous function $F:\Bnn \to \R$, we define the spherical average 
$$
\Avr_{F}(\rho)=\int\limits_{\Sp^{2} } F(\rho \zeta) \, d\sigma(\zeta), 
$$
for every $0\le \rho <1$.

\end{definition}

We say that a function on $\Bnn$ is radial if it is constant on each sphere $\zeta\Sp^{2} \subset \Bnn$ (that is, the value of the function at $x \in \Bnn$ depends only on $|x|$).

\begin{lemma}\label{lemma-average} Let $F$ and $\Phi$ be continuous functions on $\Bnn$ and assume that $\Phi$ is a radial function. Then the identity
$$
\int\limits_{\Bnn} \Phi(x)F(x) \, d\lambda(x)=  \int\limits_{0}^{1} \frac{3\rho^{2}\Phi(\rho) \Avr_F(\rho)}{(1-\rho^{2})^3} \, d\rho  =\int\limits_{\Bnn} \Phi(x) \Avr_F (x)  \, d\lambda(x),
$$
holds.
\end{lemma}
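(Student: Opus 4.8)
The plan is to reduce the identity to a double application of Fubini's theorem together with Definition~\ref{def-average}. First I would pass to polar coordinates $x=\rho\zeta$ with $0\le\rho<1$ and $\zeta\in\Sp^2$, using the formula
$$
d\lambda(x)=\frac{3\rho^2\,d\rho\,d\sigma(\zeta)}{(1-\rho^2)^3}
$$
recorded just above. Since $\Phi$ is radial, $\Phi(x)=\Phi(\rho)$ is independent of $\zeta$, so it pulls out of the inner integral over $\Sp^2$; what remains of the inner integral is exactly $\int_{\Sp^2}F(\rho\zeta)\,d\sigma(\zeta)=\Avr_F(\rho)$ by Definition~\ref{def-average}. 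This yields
$$
\int_{\Bnn}\Phi(x)F(x)\,d\lambda(x)=\int_0^1\frac{3\rho^2\Phi(\rho)\Avr_F(\rho)}{(1-\rho^2)^3}\,d\rho,
$$
which is the first equality.

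For the second equality, observe that $\Avr_F$ is by construction a radial function on $\Bnn$ (its value at $x$ depends only on $|x|$), so the already-proved first equality applies with $F$ replaced by $\Avr_F$. The key point is that $\Avr_{(\Avr_F)}(\rho)=\Avr_F(\rho)$: the spherical average of a radial function is that function itself, because $F$ constant on the sphere $\rho\Sp^2$ integrates against the probability measure $\sigma$ to give back its value. Hence
$$
\int_{\Bnn}\Phi(x)\Avr_F(x)\,d\lambda(x)=\int_0^1\frac{3\rho^2\Phi(\rho)\Avr_{(\Avr_F)}(\rho)}{(1-\rho^2)^3}\,d\rho=\int_0^1\frac{3\rho^2\Phi(\rho)\Avr_F(\rho)}{(1-\rho^2)^3}\,d\rho,
$$
matching the middle expression and completing the chain of equalities.

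The only genuine technical point — and the step I would be most careful about — is justifying the interchange of integration implicit in writing the $\lambda$-integral over $\Bnn$ as an iterated integral in $\rho$ and $\zeta$. This is where one needs either an integrability hypothesis (e.g. the integrand in $L^1(d\lambda)$) or, as is implicit in the paper's usage, that the identity is applied to functions for which the integrals converge absolutely; granting that, Tonelli/Fubini applies since the measures are $\sigma$-finite and the polar-coordinate change of variables is classical. Everything else is bookkeeping: continuity of $F$ and $\Phi$ ensures $\Avr_F$ is well defined and continuous in $\rho$, and radiality of $\Phi$ and of $\Avr_F$ is exactly what makes the factorization of the inner integral legitimate.
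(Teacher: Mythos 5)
Your proof is correct and follows the same route as the paper: pass to polar coordinates, pull the radial factor $\Phi$ out of the spherical integral to recognize $\Avr_F(\rho)$, and then identify the resulting one-dimensional integral with the $\lambda$-integral of $\Phi\cdot\Avr_F$ since $\Avr_F$ is itself radial. Your extra remarks on Fubini/Tonelli and on $\Avr_{(\Avr_F)}=\Avr_F$ are sound but go slightly beyond what the paper bothers to write down.
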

\begin{proof} Passing to polar coordinates we get 

\begin{align*}
\int\limits_{\Bnn}  \Phi(x) F(x) \, d\lambda(x) & = \int\limits_{0}^{1}  \int\limits_{\Sp^{2}}  \frac{3\rho^{2}\Phi(\rho)F(\rho \zeta)}{(1-\rho^{2})^3} \, d\rho d\sigma(\zeta) \\
&=\int\limits_{0}^{1} \frac{3\rho^{2}\Phi(\rho) \Avr_F(\rho)}{(1-\rho^{2})^3} \, d\rho \\
&= \int\limits_{\Bnn} \Phi(x) \Avr_F(x)  \, d\lambda(x).
\end{align*}

\end{proof}

See Definition 3.3 in \cite{stoll} and formula (3) in \cite{ahlfors} for the definition of the Green function on a hyperbolic space.
By $\green_r$ denote the Green function (with respect to the hyperbolic Laplacian $\lap$)  for the ball $r\Bnn$.  Then $\green_r$ is a radial function and is given by
$$
\green_r(x)=\frac{1}{3}\int\limits_{|x|}^{r} \frac{(1-s^{2})}{s^{2}}\, ds,
$$
for $|x| \le r$ and $\green_r(x)=0$ when $r<|x|<1$. The function $\green_r$ is continuous on $\Bnn$.

The identity in the next lemma is obtained as the limit when $\epsilon \to 0$ of  the second Green formula   applied to the functions $F$ and $\green_r$ on the annulus $0<\epsilon\le |x|\le r$  
(see Theorem 4.2 in \cite{stoll}).
\begin{lemma}\label{lemma-green}  Let $F:\Bnn \to \R$ be a  $C^2$ function. Then
\begin{equation}\label{eq-green}
F(0)+ \int\limits_{\Bnn}  \green_r(x) \lap F(x) \, d\lambda(x)=\Avr_F(r)
\end{equation}
for each $0\le r<1$ (we recall that $\lap F(x)$ is computed with respect to the hyperbolic metric on $\Ho$).
\end{lemma}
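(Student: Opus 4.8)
The plan is to apply the second Green identity for the hyperbolic Laplace--Beltrami operator to the pair $F$ and $\green_r$ on the truncated ball $\Omega_\epsilon=\{x\in\Bnn:\ \epsilon\le|x|\le r\}$, and then to let $\epsilon\to0$. First I would observe that $\green_r$, viewed as a function of the hyperbolic distance $t$ to the origin, is a constant multiple of $\coth t$ up to an additive constant (a short change of variables $|x|=\tanh(t/2)$ turns $\frac{1-s^2}{s^2}\,ds$ into $\frac{2}{\sinh^2 u}\,du$), so a direct computation gives $\lap\green_r=0$ on $\Omega_\epsilon$. Consequently the interior term in Green's identity collapses to $\int_{\Omega_\epsilon}\green_r\,\lap F\,d\lambda$, and only two boundary integrals remain, over the spheres $\{|x|=r\}$ and $\{|x|=\epsilon\}$, each carrying the hyperbolic surface measure $dS$ and the hyperbolic outward normal derivative $\partial_\nu$.

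On $\{|x|=r\}$ one has $\green_r\equiv0$, so the surviving term there is $-\int_{|x|=r}F\,\partial_\nu\green_r\,dS$. Here I would compute $\partial_\nu\green_r$ from the explicit formula $\green_r(x)=\tfrac13\int_{|x|}^r\frac{1-s^2}{s^2}\,ds$ together with the polar-coordinate form of the hyperbolic metric and its surface element; the point is that the factor $\tfrac13$ in $\green_r$ and the normalization $\sigma(\Sp^2)=1$ are arranged precisely so that $-\partial_\nu\green_r\,dS$ pushes forward under $\zeta\mapsto r\zeta$ to the probability measure $d\sigma$ on $\Sp^2$, whence this boundary component contributes exactly $\int_{\Sp^2}F(r\zeta)\,d\sigma(\zeta)=\Avr_F(r)$, uniformly in $\epsilon$. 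One could alternatively note that $r\Bnn$ is invariant under all rotations about the origin, which are hyperbolic isometries, so its harmonic measure based at $0$ is forced to be the rotation-invariant probability measure $d\sigma$.

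On $\{|x|=\epsilon\}$, where $\nu$ points toward the origin, I would treat the two pieces separately. The piece $\int_{|x|=\epsilon}\green_r\,\partial_\nu F\,dS$ is $O(\epsilon)$ and hence vanishes in the limit, since $\green_r(x)=\tfrac{1}{3|x|}+O(1)$ near $0$, $\partial_\nu F$ is bounded because $F\in C^1$, and the hyperbolic area of $\{|x|=\epsilon\}$ is $O(\epsilon^2)$. For the piece $-\int_{|x|=\epsilon}F\,\partial_\nu\green_r\,dS$, the fundamental-solution asymptotics of $\green_r$ at the origin, together with the fact that near $0$ the hyperbolic and Euclidean structures agree to first order, give that $-\partial_\nu\green_r\,dS$ restricted to $\{|x|=\epsilon\}$ converges weakly to $d\sigma$, so by continuity of $F$ this piece tends to $F(0)$. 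For the interior integral I would invoke dominated convergence: $\green_r\ge0$, $\green_r(x)=\tfrac{1}{3|x|}+O(1)$ near $0$ and $d\lambda(x)=3\rho^2(1-\rho^2)^{-3}\,d\rho\,d\sigma(\zeta)$, so $\green_r\in L^1(\Bnn,d\lambda)$, and $\lap F$ is continuous hence bounded on $\overline{r\Bnn}$; thus $\int_{\Omega_\epsilon}\green_r\,\lap F\,d\lambda\to\int_{r\Bnn}\green_r\,\lap F\,d\lambda=\int_{\Bnn}\green_r\,\lap F\,d\lambda$, the last equality because $\green_r\equiv0$ for $|x|>r$. Combining the three limits yields $\int_{\Bnn}\green_r\,\lap F\,d\lambda=\Avr_F(r)-F(0)$, which is (\ref{eq-green}).

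The step I expect to be the main obstacle is the inner-sphere boundary term: one has to verify that, with the conventions in force (namely $\sigma$ a probability measure, $\mu$ a probability measure on $\Bnn$, and the factor $\tfrac13$ in $\green_r$), the weak limit of $-\partial_\nu\green_r\,dS$ over spheres shrinking to the origin is exactly $d\sigma$, so that the coefficient of $F(0)$ comes out to be precisely $1$. This is really the assertion that $\green_r$ is normalized as a bona fide hyperbolic Green function in Stoll's conventions, and it can be read off from the explicit formula for $\green_r$; the remaining steps --- identifying the $\{|x|=r\}$ term, the vanishing of $\int_{|x|=\epsilon}\green_r\,\partial_\nu F\,dS$, and the dominated-convergence argument for the volume integral --- are routine once the explicit formula for $\green_r$ and the polar-coordinate form of the hyperbolic metric are in hand.
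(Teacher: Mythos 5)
Your proposal is correct and follows exactly the route the paper indicates: the paper gives no detailed proof, stating only that the identity is obtained as the $\epsilon\to 0$ limit of the second Green formula applied to $F$ and $\green_r$ on the annulus $\epsilon\le |x|\le r$ (citing Theorem 4.2 of Stoll), which is precisely the argument you carry out, including the harmonicity of $\green_r$ away from $0$, the identification of the outer boundary term with $\Avr_F(r)$, and the flux normalization producing the coefficient $1$ on $F(0)$. The one point you rightly flag --- that the $\tfrac13$ in $\green_r$ and the probability normalizations make the inner flux limit exactly $d\sigma$ --- is indeed where the metric/Laplacian convention must be checked, and it works out with Stoll's normalization of the invariant Laplacian.
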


By $\green \equiv \green_1$ we denote the Green function for $\Bnn$. We will need the following standard estimate for $\green$
\begin{equation}\label{eq-green-est}
\const_1 \frac{(1-\rho^{2})^{2}}{\rho} \le \green(\rho) \le \const_2 \frac{(1-\rho^{2})^{2}}{\rho},
\end{equation}
where $\const_1$ and $\const_2$ are universal constants and $\green(x)=\green(|x|)=\green(\rho)$. 

We finish this subsection by observing that $0\le \green_r (x) <\green(x)$, $x \in \Bnn$, and that $\green_r$ converges to $\green$, when $r \to 1$,  uniformly on compact subsets of $\Bnn$.

\section{Good and admissible extensions of quasiconformal maps} In this section we state the theorem that a good  ($\good$) extension exists. In the last subsection we prove an important property of $\good$.

\subsection{Admissible extensions} We are seeking harmonic quasi-isometric extensions of quasiconformal maps of the sphere $\Sp^{2}$. On the other hand, it is well known that each quasiconformal map 
$f \in \QC(\Sp^{2})$ can be extended to a smooth quasi-isometry. The Douady-Earle \cite{d-e} (or the barycentric) extension  is an example. In fact, the Douady-Earle extension is an example of an \textsl{admissible} extension that we now define.

\begin{definition}\label{def-admissible} We say that a mapping $\Ext: \QC(\Sp^{2}) \to (C^{2}(\Ho) \cap \QI(\Ho) )$ is an admissible extension if it has the following properties:

\begin{enumerate}
\item \textbf{Uniform quasi-isometry}: There are constants $L=L(K) \ge 1$ and $A=A(K) \ge 0$  such that if $f$ is $K$-quasiconformal, then $\Ext(f)$ is 
a $(L,A)$-quasi-isometry.

\item \textbf{Uniformly bounded tension}: There exists a constant $T=T(K)>0$  such that 
$$
||\tau(\Ext(f))||\le T.
$$

\item \textbf{Continuity}: If $f_n$ is a sequence of $K$-quasiconformal maps that pointwise converges to some $f \in \QC(\Sp^{2})$, then $\Ext(f_n) \to \Ext(f)$ in $C^{2}(\Ho)$ (that is, the first and second derivatives of $\Ext(f_n)$ converge 
respectively to the first and second derivatives of  $\Ext(f)$, uniformly on compact sets in $\Ho$).

\end{enumerate}

\end{definition}

Beside being an admissible extension, the Douady-Earle extension has further properties, like being conformally natural, real analytic, etc., but  this definition of admissible extension is sufficient for our purposes.

Recall that $\QC(\Sp^2)$ is the space of normalized quasiconformal mappings of $\Sp^2$.  We do not assume that  an admissible extension is conformally natural, that is if $f \in \QC(\Sp^2)$, and $I,J \in \Isom(\Ho)$ are two M\"obius maps such that $I \circ f \circ J$ is again normalized, then we do not require that $I\circ \Ext(f)\circ J=\Ext(I\circ f \circ J)$ . 
The following lemma offers a suitable replacement.

\begin{lemma}\label{lemma-distance-bound} Let $\Ext$ denote an admissible extension. Then for each $K \ge 1$ there exists a constant $D=D(K,\Ext)$  such that for every $K$-quasiconformal map $f \in \QC(\Sp^{2})$ and any two isometries $I,J \in \Isom(\Ho)$ such that $I \circ f \circ J$ is normalized,  we have
$$
\sup_{x \in \Ho} d_{\Ho} \big( (I \circ \Ext(f)\circ J)(x),\Ext(I\circ f \circ J)(x) \big) \le D.
$$
\end{lemma}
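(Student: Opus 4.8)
The plan is to reduce the statement to the quantitative form of the well-known fact (stated qualitatively in Section~2) that a quasi-isometry of $\Ho$ is determined by its boundary values up to bounded distance, namely: there is a constant $C_0=C_0(L,A)$ such that any two $(L,A)$-quasi-isometries $F_1,F_2$ of $\Ho$ with $F_1|_{\Sp^2}=F_2|_{\Sp^2}$ satisfy $\sup_{x\in\Ho}d_{\Ho}(F_1(x),F_2(x))\le C_0$. Granting this, the lemma follows by bookkeeping. Since $I|_{\Sp^2}$ and $J|_{\Sp^2}$ are Möbius (hence $1$-quasiconformal) maps of $\Sp^2$, the composition $g:=I\circ f\circ J$ is $K$-quasiconformal; by Property~(1) (uniform quasi-isometry) of an admissible extension, $\Ext(f)$ and $\Ext(g)$ are $(L,A)$-quasi-isometries with $L=L(K)$, $A=A(K)$; and since isometries are $(1,0)$-quasi-isometries, $F_1:=I\circ\Ext(f)\circ J$ and $F_2:=\Ext(g)=\Ext(I\circ f\circ J)$ are $(L,A)$-quasi-isometries with the same constants. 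Because $\Ext(f)$ is an extension of $f$, the boundary restriction of $F_1$ is $I\circ f\circ J=g$, which is also the boundary restriction of $F_2$. Applying the displayed fact to $F_1,F_2$ gives $\sup_x d_{\Ho}(F_1(x),F_2(x))\le C_0(L(K),A(K))=:D(K,\Ext)$.

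To prove the quantitative rigidity fact I would invoke the stability of quasigeodesics (Morse lemma) in $\Ho$: there is $\delta=\delta(L,A)$ such that every $(L,A)$-quasigeodesic line lies within Hausdorff distance $\delta$ of the geodesic with the same endpoints on $\Sp^2$. Fix $x\in\Ho$ and choose two geodesic lines $\gamma_1,\gamma_2$ through $x$ meeting there at a right angle, with $\partial\gamma_i=\{\xi_i^-,\xi_i^+\}$. For $m\in\{1,2\}$ the curve $F_m\circ\gamma_i$ is an $(L,A)$-quasigeodesic line with endpoints $g(\xi_i^-),g(\xi_i^+)$ (as $F_m$ restricts to $g$ on $\Sp^2$), so $F_m(x)$ lies within $\delta$ of the geodesic $\ell_i:=[\,g(\xi_i^-),g(\xi_i^+)\,]$. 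Hence $F_1(x),F_2(x)\in R:=N_\delta(\ell_1)\cap N_\delta(\ell_2)$, and it remains to bound $\operatorname{diam}R$ independently of $x$. Now the cross-ratio of $(\xi_1^-,\xi_1^+,\xi_2^-,\xi_2^+)$ is a fixed value $c_0\notin\{0,1,\infty\}$ (two geodesics crossing orthogonally form a configuration that is unique up to an isometry of $\Ho$), and $g$, being $K$-quasiconformal, is quasi-Möbius with constant depending only on $K$; so the cross-ratio of $(g\xi_1^-,g\xi_1^+,g\xi_2^-,g\xi_2^+)$ stays in a compact subset of $\hat{\C}\setminus\{0,1,\infty\}$ determined by $K$. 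Two geodesics of $\Ho$ whose four endpoints have cross-ratio in such a set either cross or admit a common perpendicular of length bounded in terms of $K$, and in either case $N_\delta(\ell_1)\cap N_\delta(\ell_2)$ has diameter at most $C(\delta,K)$. Thus $C_0=C_0(L,A,K)$, and with $L=L(K)$, $A=A(K)$ we obtain the required $D=D(K,\Ext)$.

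The reduction in the first paragraph is routine; the content is the quantitative rigidity, whose delicate point is that a bound on $\operatorname{diam}R$ uniform in $x$ genuinely needs the control of how the boundary map distorts cross-ratios — the purely qualitative statement recalled in Section~2 does not by itself yield a bound depending only on $K$. An alternative to the direct argument is a contradiction scheme: if the lemma failed for some $K$, take $f_n$, $I_n$, $J_n$, $x_n$ violating the bound, conjugate by isometries so that $\Ext(I_n\circ f_n\circ J_n)$ is sent to fix the origin and the two maps differ at the origin by $M_n\to\infty$; using the Morse lemma one checks that the conjugated boundary maps cannot degenerate, so after passing to a subsequence (by compactness of normalized $K$-quasiconformal maps) they converge, and since geodesics with convergent endpoints converge, the regions $R_n$ containing the two images of the origin have uniformly bounded diameter for large $n$ — contradicting $M_n\to\infty$. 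This route needs the same geometric input, so I would present the direct argument above, citing the Morse lemma and the quasi-Möbius property of quasiconformal maps as standard facts about $\Ho$ (cf. the references in Section~2).
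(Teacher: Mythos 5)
Your proof is correct and is essentially the paper's: the paper's entire argument is your first paragraph — $I\circ\Ext(f)\circ J$ and $\Ext(I\circ f\circ J)$ are both $(L(K),A(K))$-quasi-isometries of $\Ho$ agreeing on $\Sp^{2}$, hence a uniformly bounded distance apart. The only difference is that the paper simply cites Pansu and Li--Wang for the quantitative boundary-rigidity fact, whereas you supply a proof of it via the Morse lemma and quasi-M\"obius control of cross-ratios; that supplied proof is sound and correctly identifies that the uniformity of the bound requires control of the boundary map's cross-ratio distortion.
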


\begin{proof} The maps  $I \circ \Ext(f)\circ J$ and $ \Ext(I\circ f \circ J)$ are both  $(L,A)$-quasi-isometries, where $L$ and $A$ depend only on $K$. Since these two maps agree on $\Sp^{2}$, it follows that their distance is bounded by a constant that only depends on $K$ (see \cite{pansu} or \cite{l-w}) .

\end{proof}

\subsection{The $\good$-extension} 

Given an admissible extension $\Ext(f)$, $f \in \QC(\Sp^{2})$,  we want to identify  points in $\Bnn$ at which this extension is close to being harmonic and locally quasiconformal. We make this precise as follows.

\begin{definition} Let $K_1, \epsilon>0$ and  let $f \in \QC(\Sp^{2})$ denote a $K$-qc map. Define the (open) set  $X_f^{\Ext}(K_1,\epsilon) \subset \Ho$ by letting $x \in X_f^{\Ext}(K_1,\epsilon)$ if the following conditions hold:

\begin{enumerate}
\item $|\tau(\good(f))(x)| < \epsilon$,
\item $\dist(\good(f))(x)< K_1$, and
\item  $\ener(\good(f))(x)>1$.
\end{enumerate}

\end{definition}

When $K_1$ is being kept fixed and we let $\epsilon$ be small, then $\Ext(f)$ is close to being harmonic on the open set  $X_f^{\Ext}(K_1,\epsilon)$ (by definition, the map $\Ext(f)$ is locally $K_1$-quasiconformal on  $X_f^{\Ext}(K_1,\epsilon)$).

The following theorem can be interpreted as saying  that there exists an admissible extension that is ``close to being harmonic" at a random point in $\Ho$. 
We now pass onto the unit ball model $\Bnn$ of the hyperbolic space $\Ho$.

\begin{theorem}\label{thm-extension} There exists an admissible extension $\good: \QC(\Sp^{2}) \to (C^{2}(\Ho) \cap \QI(\Ho) )$ such that for  every $K\ge 1$, $\epsilon>0$, 
and  a $K$-qc map $f \in \QC(\Sp^{2})$, we have

\begin{equation}\label{convergence}
\lim_{\rho \to 1} \sigma\big( \{\zeta \in \Sp^{2}:\, \rho\zeta \in X^{\good}_f(2K,\epsilon)\} \big)   \to 1.
\end{equation}

\end{theorem}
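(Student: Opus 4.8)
The plan is to prove (\ref{convergence}) by a blow-up argument at the points of differentiability of $f$, using the two structural features of $\good$ already isolated — its continuity (property (3) of an admissible extension) and its naturality under the affine group of the boundary $\R^2$ — together with the fact, built into the explicit construction of Section~5, that $\good$ carries a linear boundary map to a harmonic map of $\Ho$. Since $\sigma$ is a probability measure, by dominated convergence it suffices to prove the pointwise statement: for $\sigma$-almost every $\zeta\in\Sp^2$ there is $\rho_0=\rho_0(\zeta,\epsilon)<1$ with $\rho\zeta\in X^{\good}_f(2K,\epsilon)$ for all $\rho_0<\rho<1$. By the quasiconformal rigidity in dimension $3$ recalled in the introduction, $f$ is differentiable with $\det Df\neq 0$ at $\sigma$-a.e.\ point, so we fix one such $\zeta$.

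We work in the upper half-space model $\Ho\cong\R^2\times\R_{>0}$ in which $\good$ is constructed. The ball-to-halfspace identification is built into the definition of $\good$; composing $f$ with suitable translations of $\R^2$ (which are similarities, so $\good$ is natural under them, and which change $|\tau|$, $\ener$, $\dist$ only by an isometry of $\Ho$ and do not affect a.e.-differentiability) we may assume that $\zeta$ is a finite boundary point and $f(0)=0$, so $\ell:=Df(0)$ is an invertible linear map of $\R^2$ with $\dist(\ell)\le K$. Let $\delta_s(x',x_n)=(sx',sx_n)$, $s>1$, denote the hyperbolic dilations and put $f_s:=\delta_s\circ f\circ\delta_{1/s}$, i.e.\ $f_s(x')=s\,f(x'/s)$. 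Differentiability of $f$ at $0$ gives $f_s\to\ell$ locally uniformly on $\R^2$ (and $f_s(\infty)\to\infty$), hence pointwise on $\Sp^2$, through $K$-quasiconformal maps; naturality under the similarity $\delta_s$ gives $\good(f_s)=\delta_s\circ\good(f)\circ\delta_{1/s}$; and continuity of $\good$ — which for the explicit formula of Section~5 holds without the normalization restriction — gives $\good(f_s)\to\good(\ell)$ in $C^2$ on compact subsets of $\Ho$. The geodesic ray $\rho\zeta$, $\rho\to1$, corresponds in this model to a geodesic ray ending at $0$, and the dilations $\delta_s$ (with the appropriate scale for each point) carry the points of that ray near $0$ to points converging to the base point $p=(0,0,1)$. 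Since the $\delta_s$ are isometries and $\good(f_s)\to\good(\ell)$ uniformly near $p$, it follows that $|\tau(\good(f))(\rho\zeta)|\to|\tau(\good(\ell))(p)|$, $\ener(\good(f))(\rho\zeta)\to\ener(\good(\ell))(p)$, and $\dist(\good(f))(\rho\zeta)\to\dist(\good(\ell))(p)$ as $\rho\to 1$.

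It remains to read off these three numbers for $\good(\ell)$. By naturality under the orthogonal group we may take $\ell=\mathrm{diag}(\sigma_1,\sigma_2)$. Since $\good(\ell)$ is harmonic by construction, uniqueness of harmonic quasi-isometric extensions forces it to be equivariant under the $\delta_s$ and symmetric under $x_1\mapsto-x_1$, $x_2\mapsto-x_2$; hence $D\good(\ell)(p)$ is diagonal, its vertical hyperbolic stretch equals $1$, and the equivariant reduction of the harmonic map system to an ordinary differential equation yields $\dist(\good(\ell))(p)\le K$ and $\ener(\good(\ell))(p)\ge 3/2$. Thus $|\tau(\good(\ell))(p)|=0<\epsilon$, $\dist(\good(\ell))(p)\le K<2K$, and $\ener(\good(\ell))(p)\ge 3/2>1$, all three inequalities strict, so the limits above force $\rho\zeta\in X^{\good}_f(2K,\epsilon)$ for $\rho$ sufficiently close to $1$. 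This is the pointwise statement, and (\ref{convergence}) follows.

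The main obstacle is the last step: identifying $\good(\ell)$ for a linear $\ell$ and, above all, checking that its tension \emph{vanishes} at $p$ (with distortion and energy in the stated ranges). This is precisely the property that the construction of Section~5 has to deliver. A naive Ahlfors--Beurling average sends $\ell=\mathrm{diag}(\sigma_1,\sigma_2)$ to the affine map $(x_1,x_2,x_3)\mapsto(\sigma_1x_1,\sigma_2x_2,\sqrt{\sigma_1\sigma_2}\,x_3)$ of $\Ho$, whose tension field has hyperbolic norm $(a-a^{-1})^2$ with $a=\sqrt{\sigma_1/\sigma_2}$ — nonzero unless $\ell$ is conformal — so the construction must be adjusted to be harmonic (equivalently, asymptotically harmonic at small scales) on affine data, since otherwise $|\tau(\good(f))|$ would not decay towards $\Sp^2$ and (\ref{convergence}) would be false. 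A secondary and routine matter is the bookkeeping around the change of model and the fact that $\good$ is natural only under the affine subgroup rather than all of $\Isom(\Ho)$, together with the passage from the pointwise-a.e.\ statement back to the measure statement. An alternative to the blow-up would be to estimate $|\tau(\good(f))(x)|$ directly from the explicit formula, bounding it by the oscillation of $Df$ near the footpoint of $x$, which tends to $0$ at $\sigma$-a.e.\ boundary point.
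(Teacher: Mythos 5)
Your argument for deriving (\ref{convergence}) from the properties of $\good$ is essentially the paper's: reduce to the pointwise statement at a $\sigma$-a.e.\ point of differentiability (with nondegenerate derivative) via dominated convergence, blow up along the geodesic ray using the naturality of $\good$ under the similarities of $\R^2$ and its continuity to get $\good(f_s)\to\good(\ell)$ in $C^2$ near the base point, and then read off that $\good(\ell)$ has zero tension, distortion $\le K<2K$, and energy $3/2>1$ there. The paper runs the same blow-up as a compactness/contradiction argument (Lemma \ref{lemma-last}), and your numerical claims about the harmonic extension of a linear map agree with its formulas (\ref{eq-linear})--(\ref{eq-last-3}).

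The genuine gap is the one you flag yourself: the theorem is an existence statement, and you never produce the extension, only the list of properties it must have. The crux you correctly identify --- that $\good$ must send affine boundary data to \emph{harmonic} maps, which the naive Ahlfors--Beurling average fails to do because it stretches the vertical direction by $\sqrt{\det\ell}$ instead of the harmonic rate --- is resolved in the paper by an explicit formula: in the upper half-space model,
\begin{equation*}
\good(f)(x,t)=\left( \int\limits_{\R^2}f(x+ty)\phi(y)\, dy,\ \frac{t}{\sqrt{2}} \sqrt{ \int\limits_{\R^2}\ener(f)(x+ty)\phi(y)\, dy }  \right),
\end{equation*}
where $\phi$ is the Gauss kernel and $\ener(f)$ is the a.e.-defined Euclidean energy density of $f$. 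On a linear map $L$ both averages are constant in $y$, so $\good(L)(x,t)=(Lx,\sqrt{\ener(L)/2}\,t)$, which is exactly the Li--Tam harmonic extension --- this is precisely the ``adjustment'' you say must exist. What your proposal omits, and what still requires proof even granting the formula, is that this $\good$ is well defined (convergence of the second integral uses the H\"older growth bound $|f(x)|\lesssim|x|^K$ and the local integrability of the Jacobian of a quasiconformal map near infinity) and that it is admissible in the sense of Definition \ref{def-admissible} (uniform quasi-isometry, uniformly bounded tension, continuity); the paper itself treats these as routine consequences of smoothness, conformal naturality under $\Isom_\infty(\Hoo)$, and compactness of normalized $K$-qc families. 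So your proof is the right skeleton, with the load-bearing object declared rather than built.
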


\begin{remark}
We will prove Theorem \ref{thm-extension} in the last section of this paper. Until then, we fix once and for all such an extension $\good$, 
and for each $\epsilon>0$ and a $K$-qc map $f \in \QC(\Sp^2)$, we let
\begin{equation}\label{eq-X}
X_f(\epsilon)=X_f^{\good}(2K,\epsilon).
\end{equation}
We also fix the constant $T=T(K)$ from the definition of an admissible extension, that is $||\tau(\good(f))||\le T$, for every $K$-quasiconformal map $f$.
\end{remark}

\subsection{Uniformity properties of $\good$} The convergence in (\ref{convergence}) may not be uniform over all $K$-quasiconformal mappings $f \in \QC(\Sp^{2})$, but since $\good$ is continuous (see the property  $(3)$ in Definition \ref{def-admissible}) we can still prove a similar (but weaker) statement that is quite sufficient for our purposes.  The following lemma is a corollary of (\ref{convergence}) and the continuity of $\good$, and it will be applied in the proof of Theorem \ref{thm-main-1} below.

\begin{lemma}\label{lemma-pomocna}  Let  $C_1,C_2,\epsilon_0>0$ and $K \ge 1$.
For each $K$-quasiconformal  $f \in \QC(\Sp^{2})$ we let 
$$
\Phi(f,C_1,C_2)=\Phi(f)(x)=
\begin{cases}
C_2,\,\, \text{if}\,\,x \in   X_f(\epsilon_0)    ,\\
-C_1,\,\,  \text{otherwise}.
\end{cases}
$$
Then for every $M>0$, there exists $0<r_0=r_0(M,K,C_1,C_2)<1$ such that 
\begin{equation}\label{eq-formula}
\int\limits_{\Bnn} \green_{r_{1}}(x)\Phi(f)(x) \, d\lambda(x)>M,
\end{equation}
for some $0<r_1\le r_0$.
\end{lemma}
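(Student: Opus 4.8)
The plan is to reduce the uniform statement to its pointwise-in-$f$ version by a compactness argument --- the uniformity of the convergence in (\ref{convergence}) being exactly what is missing --- and then to settle that pointwise version by a direct estimate for the Green function.

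\emph{Step 1 (the single-map statement).} First I would show that for each fixed $K$-quasiconformal $f$,
$$
\int_{\Bnn}\green_r(x)\Phi(f)(x)\,d\lambda(x)\ \longrightarrow\ +\infty\qquad(r\to1^-).
$$
Since $\Phi(f)$ equals $C_2$ on $X_f(\epsilon_0)$ and $-C_1$ elsewhere, its spherical average is $\Avr_{\Phi(f)}(\rho)=(C_1+C_2)\,\sigma\{\zeta:\rho\zeta\in X_f(\epsilon_0)\}-C_1$, which tends to $C_2>0$ as $\rho\to1$ by Theorem \ref{thm-extension}. Choosing $\rho_1=\rho_1(f)<1$ with $\Avr_{\Phi(f)}\ge C_2/2$ on $[\rho_1,1)$ and applying Lemma \ref{lemma-average}, the integral is at least $\tfrac{C_2}{2}\int_{\rho_1}^{r}\tfrac{3\rho^2\green_r(\rho)}{(1-\rho^2)^3}\,d\rho$ minus a constant depending only on $\rho_1,C_1,C_2$ (the contribution of $[0,\rho_1]$, bounded using $|\Avr_{\Phi(f)}|\le\max(C_1,C_2)$ and $0\le\green_r\le\green$); and by Lemma \ref{lemma-average} with $F\equiv1$, $\int_{\rho_1}^{r}\tfrac{3\rho^2\green_r(\rho)}{(1-\rho^2)^3}\,d\rho=\int_{\Bnn}\green_r\,d\lambda$ minus a bounded piece, while $\int_{\Bnn}\green_r\,d\lambda\nearrow\int_{\Bnn}\green\,d\lambda=+\infty$ by monotone convergence and the lower bound in (\ref{eq-green-est}). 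This is the only genuine computation and it is routine.

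\emph{Step 2 (promoting to uniformity).} Suppose the lemma fails: there is an $M>0$ and, for every $n$, a $K$-quasiconformal $f_n\in\QC(\Sp^2)$ with $\int_{\Bnn}\green_r(x)\Phi(f_n)(x)\,d\lambda(x)\le M$ for all $r\le1-\tfrac1n$. Since the set of normalized $K$-quasiconformal self-maps of $\Sp^2$ is compact in the topology of uniform convergence, I would pass to a subsequence with $f_n\to f$ uniformly, $f$ again $K$-quasiconformal; by the continuity property (3) of $\good$ in Definition \ref{def-admissible}, $\good(f_n)\to\good(f)$ in $C^2(\Bnn)$. Because $X_f(\epsilon_0)$ is cut out by the three \emph{strict} inequalities (1)--(3) on the $C^2$-jet of $\good(f)$ (for (2): at a point of finite distortion the quasiconformal distortion depends continuously on the differential), each $x\in X_f(\epsilon_0)$ lies in $X_{f_n}(\epsilon_0)$ for all large $n$, so $\liminf_n\Phi(f_n)(x)\ge\Phi(f)(x)$ pointwise on $\Bnn$. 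Fixing $r<1$ and applying Fatou's lemma to the nonnegative integrands $\green_r\,(\Phi(f_n)+C_1)$ (legitimate since $\green_r\in L^1(\lambda)$ and vanishes off $r\Bnn$), then cancelling $C_1\int_{\Bnn}\green_r\,d\lambda$, I would get $\int_{\Bnn}\green_r(x)\Phi(f)(x)\,d\lambda(x)\le\liminf_n\int_{\Bnn}\green_r(x)\Phi(f_n)(x)\,d\lambda(x)\le M$, where the last inequality uses $r\le1-\tfrac1n$ for $n$ large. Since $r<1$ was arbitrary, this contradicts Step 1.

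\emph{Main obstacle.} The crux is the non-uniformity of (\ref{convergence}): the radius $\rho_1(f)$ past which the good set has near-full measure genuinely depends on $f$ and need not stay away from $1$, so Step 1 cannot be quantified uniformly. Compactness of $K$-quasiconformal maps together with the $C^2$-continuity of $\good$ is exactly what upgrades the statement, through the semicontinuity fact that membership in the open set $X_f(\epsilon_0)$ persists under $C^2$-small perturbations; the one point needing care there is the continuity of the quasiconformal distortion in the differential at points of finite distortion. A minor wrinkle is the direction of Fatou's inequality, which is why one works with $\Phi(f_n)+C_1\ge0$ rather than with $\Phi(f_n)$.
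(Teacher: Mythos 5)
Your proposal is correct and follows essentially the same route as the paper: first the pointwise divergence $\int_{\Bnn}\green_r\Phi(f)\,d\lambda\to\infty$ via the spherical average, the split at a radius $\rho_1(f)$, and the Green function estimate (\ref{eq-green-est}); then the contradiction argument using compactness of normalized $K$-quasiconformal maps, the $C^2$-continuity of $\good$, openness of $X_f(\epsilon_0)$, and Fatou's lemma. Your extra care in applying Fatou to the shifted nonnegative integrand $\Phi(f_n)+C_1$ is a small but welcome tightening of a step the paper leaves implicit.
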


\begin{proof}  We first show that 
\begin{equation}\label{eq-formula-3}
\lim\limits_{r \to 1} \int\limits_{\Bnn} \green_{r} (x)\Phi(f)(x) \, d\lambda(x)=\infty,
\end{equation}
for each $f \in  \QC(\Sp^{2})$.

\vskip .3cm
By Lemma \ref{lemma-average}, we have 
\begin{equation}\label{eq-1}
\int\limits_{\Bnn} \green_r(x)\Phi(f)(x) \, d\lambda(x)=\int\limits_{0}^{r} \frac{3\rho^{2}\green_r(\rho)\Avr_{\Phi(f)}(\rho)}{(1-\rho^{2})^3} \, d\rho.
\end{equation}

It follows from the definition of $\Phi(f)$ that
\vskip .3cm
$$
\Avr_{\Phi(f)}(\rho)= C_2 \sigma\big( \{\zeta \in \Sp^{2}:\, \rho \zeta \in X_f(\epsilon_0) \} \big)-C_1  \big(1-\sigma\big( \{\zeta \in \Sp^{2}:\, \rho \zeta \in X_f(\epsilon_0) \} \big).
$$
\vskip .3cm
\noindent
This inequality  together with (\ref{convergence}), implies that there exists $0<r_0=r_0(f,C_1,C_2)<1$, such that 
$$
\Avr_{\Phi(f)}(\rho)>\frac{C_2}{2},\,\, \text{for every} \,\, r_0 \le \rho<1.
$$

Plugging this into (\ref{eq-1}) shows that for every $r_0<r_1 \le r$, we have the inequality 
\begin{equation}\label{eq-2}
\int\limits_{\Bnn} \green_r(x)\Phi(f)(x) \, d\lambda(x) \ge \frac{C_2}{2}\int\limits_{r_{0}}^{r_{1}} \frac{3\rho^{2}\green_r(\rho)}{(1-\rho^{2})^3} \, d\rho - C_1\int\limits_{0}^{r_{0}} \frac{3\rho^{2}\green_r(\rho)}{(1-\rho^{2})^3} \, d\rho,
\end{equation}

Fix  $r_1$, and let $r \to 1$.  Since $\green_r(\rho) \to \green(\rho)$ uniformly on compact sets in $[0,1)$, it follows that
\begin{equation}\label{eq-3}
\int\limits_{r_{0}}^{r_{1}} \frac{3\rho^{2}\green_r(\rho)}{(1-\rho^{2})^3} \, d\rho \to \int\limits_{r_{0}}^{r_{1}} \frac{3\rho^{2}\green(\rho)}{(1-\rho^{2})^3} \, d\rho ,  \,\,\,  r \to 1.
\end{equation}
Using  (\ref{eq-green-est}), we get
\begin{align*}
\liminf\limits_{r \to 1} \int\limits_{r_{0}}^{r_{1}} \frac{3\rho^{2}\green_r(\rho)}{(1-\rho^{2})^3} \, d\rho &\ge  \const_2 \int\limits_{r_{0}}^{r_{1}} \frac{3\rho}{(1-\rho^{2})} \, d\rho \\
&= \frac{3\const_2}{2}\int\limits_{r^{2}_{0}}^{r^{2}_{1}} \frac{1}{1-\rho} \, d\rho \\ 
&=\frac{3\const_2}{2} \log\frac{(1-r^{2}_{0})}{(1-r^{2}_{1})} \to \infty, \,\,r_1 \to 1,
\end{align*}
which  together with (\ref{eq-2}) and (\ref{eq-3})  proves (\ref{eq-formula-3}).

\vskip .3cm

We prove the lemma by contradiction. Suppose that there exists a sequence of $K$-quasiconformal maps $f_N \in \QC(\Sp^{2})$, $N \in \N$, such that

\begin{equation}\label{eq-formula-1}
\int\limits_{\Bnn} \green_r (x)\Phi(f_N)(x) \, d\lambda(x) \le M,
\end{equation}
for every $0<r\le 1-\frac{1}{N}$. 

The maps $f_N$ are normalized and $K$-quasiconformal. Thus, after passing onto a subsequence if necessary,  there exists a $K$-quasiconformal map $f$ such that $f_N \to f$ pointwise on $\Sp^{2}$.

If $x \in X_f(\epsilon_0)$, then  $x \in X_{f_{N}}(\epsilon_0)$ for $N$ sufficiently large. This follows from the the continuity of $\good$, and the fact that $X_f$ is an open set. It follows from the definition of $\Phi(f)$ that for every $x \in \Bnn$, we have
$$
\Phi(f)(x)\le \Phi(f_N)(x),\,\,\text{for N large enough}.
$$
This implies (using the Fatou Lemma) that for a fixed $0 \le r<1$, we have
$$
\liminf_{N \to \infty} \int\limits_{\Bnn} \green_{r} (x)\Phi(f_N)(x) \, d\lambda(x) \ge   \int\limits_{\Bnn} \green_{r} (x)\Phi(f)(x) \, d\lambda(x),
$$
which together with (\ref{eq-formula-1}) implies that the estimate
\begin{equation}\label{eq-formula-2}
\int\limits_{\Bnn} \green_{r} (x)\Phi(f)(x) \, d\lambda(x) \le M,
\end{equation}
holds for every $0\le r<1$. But this contradicts (\ref{eq-formula-3}) and we are finished.

\end{proof}

\section{Proof of Theorem \ref {thm-main-1}} We prove the theorem assuming Theorem \ref{thm-extension}. The theorem essentially follows from  Lemma \ref{lemma-estimate}. The proof of the lemma 
relies on the uniformity property of $\good$ we proved above  and Lemma \ref{lemma-measure} we prove below.

\subsection{The distance between $\good$ and the harmonic extension} If a map $f \in \QC(\Sp^{2})$ admits a harmonic quasi-isometric extension, we denote this extension by $H(f):\Bnn \to \Bnn$ (such $H(f)$ is unique by \cite{l-w}).  We let
$$
\dista(f)(x)= d_{\Ho}\big(\good(f)(x),H(f)(x) \big).
$$
By $||\dista(f)||$ we denote the supremum of $\dista(f)$ over $\Bnn$, which is finite since both $\good(f)$ a $H(f)$ are quasi-isometries.

To prove Theorem \ref{thm-main-1}, we need to show that  if $f$ is $K$-quasiconformal, then $H(f)$ is $(L_1,A_1)$-quasi-isometry, for some constants $L_1=L_1(K)$ and $A_1=A_1(K)$. 
To achieve this, it is sufficient to prove that 
\begin{equation}\label{eq-new}
||\dista(f)|| \le D_1,
\end{equation}
for some constant $D_1=D_1(K)$. We establish this inequality in the remainder of this section.

For each $f \in \QC(\Sp^{2})$, we choose a point $x \in \Bnn$ where $\dista(f)(x)> ||\dista(f)||-1$. Let $I \in \Isom(\Ho)$ be such that $I(0)=x$, and  $J \in \Isom(\Ho)$ be such that $J \circ f \circ I$  is normalized to fix the three points on $\Sp^{2}$.

By Lemma \ref{lemma-distance-bound}, the distance between $\good(J\circ f \circ I)$ and $J\circ \good(f) \circ I$ is uniformly bounded above by $D=D(K)$. Since $H(J\circ f \circ I)=J\circ H(f) \circ I$,  we obtain the estimate
$$
\dista(J\circ f \circ I)(0)>  ||\dista(J\circ f \circ I)||_{\infty}-D-1.
$$
\vskip .3cm
Thus, after replacing $f$ by $J\circ f \circ I$ if necessary, we may assume that $f$ is such that
\begin{equation}\label{eq-norm}
\dista(f)(0)>  ||\dista(f)|| -D-1.
\end{equation}
In the rest of the section we prove (\ref{eq-new}) for every $f$ having this property (which will prove Theorem \ref{thm-main-1}).

From now on, we assume that (\ref{eq-norm}) holds.

\subsection{The main formula for $\dista^2(f)$} 
The proof of inequality (\ref{eq-new}) for a $K$-quasiconformal map $f$ satisfying (\ref{eq-norm}) (and thus the proof of Theorem \ref{thm-main-1}) is based on the analysis of the following identity
\vskip .3cm
\begin{equation}\label{eq-osnovna-00}
\dista^2(f)(0) +\int\limits_{\Bnn} \green_r(x) \lap \dista^2(f)(x)\, d\lambda(x) = \Avr_{\dista^{2}(f)}(r) \le ||\dista(f)||^{2},
\end{equation}
\vskip .3cm
\noindent
for every $0\le r<1$. 

Since $\dista^2(f)$ is $C^2$ on $\Bnn$, this formula follows from  (\ref{eq-green}) in Lemma \ref{lemma-green} (the upper bound in (\ref{eq-osnovna-00}) follows from the assumption that $\sigma$ is a probability measure on $\Sp^2$).

Replacing (\ref{eq-norm}) in (\ref{eq-osnovna-00}), we obtain the following inequality
\vskip .3cm
\begin{align}\label{eq-osnovna-1}
||\dista(f)||^{2}-D' ||\dista(f)||-D'' + &\int\limits_{\Bnn} \green_r(x) \lap \dista^2(f)(x)\, d\lambda(x) \le \\
\nonumber &\le \Avr_{\dista^{2}(f)}(r) \le ||\dista(f)||^{2},
\end{align}
\vskip .3cm
\noindent
where $D'=D'(K)=2(D-1)$ and $D''=D''(K)=(D+1)^2$.

\subsection{Estimating the set where $\dista^2(f)$ is small} Set
\begin{equation}\label{eq-Y}
Y_f=\{x \in \Bnn : \, \dista(f)(x) \ge \frac{1}{2}||\dista(f)|| \}.
\end{equation}

We now bound from below the measure of the set $r\Sp^2 \setminus Y_f$.

\begin{lemma}\label{lemma-measure} For each $K \ge 1$, there exists an increasing  function $\varphi_K(r)$, $0\le r<1$, such that assuming $||\dista^2(f)||\ge 1$,  the estimate 
$$
\sigma\big( \{\zeta \in \Sp^{2}:\, r\zeta \notin Y_{f} \}\big) \le \frac{\varphi_K(r)}{ ||\dista(f)||},
$$
holds for every $K$-quasiconformal $f \in \QC(\Sp^2)$ satisfying the inequality (\ref{eq-norm}).
\end{lemma}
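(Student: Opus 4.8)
The plan is to extract the estimate directly from the main inequality (\ref{eq-osnovna-1}), using only the definition (\ref{eq-Y}) of $Y_f$, the crude lower bound (\ref{distance-1}) for $\lap\dista^2(f)$, and the $\lambda$-integrability of the Green function on $r\Bnn$ for $r<1$. Write $m(r)=\sigma\big(\{\zeta\in\Sp^{2}:\, r\zeta\notin Y_{f}\}\big)$ for the quantity to be bounded. First I would estimate the spherical average $\Avr_{\dista^2(f)}(r)=\int\limits_{\Sp^{2}}\dista^2(f)(r\zeta)\,d\sigma(\zeta)$ from above: split $\Sp^{2}$ into the set where $r\zeta\in Y_f$, on which $\dista^2(f)(r\zeta)\le||\dista(f)||^2$, and its complement, on which, by (\ref{eq-Y}), one has $\dista^2(f)(r\zeta)<\tfrac14||\dista(f)||^2$. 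This gives
$$
\Avr_{\dista^2(f)}(r)\le\big(1-m(r)\big)||\dista(f)||^2+\tfrac14\, m(r)\,||\dista(f)||^2=||\dista(f)||^2\Big(1-\tfrac34 m(r)\Big).
$$

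Next I would substitute this bound into (\ref{eq-osnovna-1}); after cancelling $||\dista(f)||^2$ and rearranging one gets
$$
\tfrac34 m(r)\,||\dista(f)||^2\le D'\,||\dista(f)||+D''-\int\limits_{\Bnn}\green_r(x)\,\lap\dista^2(f)(x)\,d\lambda(x).
$$
Because $H(f)$ is harmonic, $\tau(H(f))\equiv 0$, and because $\good$ is an admissible extension, $||\tau(\good(f))||\le T=T(K)$; hence Lemma \ref{lemma-distance} (inequality (\ref{distance-1}) applied with $F=\good(f)$, $G=H(f)$) gives $\lap\dista^2(f)(x)\ge -2T\,\dista(f)(x)\ge -2T||\dista(f)||$ for all $x\in\Bnn$. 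Since $\green_r\ge 0$, since $\green_r\le\green$, and since $\green$ satisfies the estimate (\ref{eq-green-est}), the quantity
$$
B(r):=\int\limits_{\Bnn}\green_r(x)\,d\lambda(x)\le\int\limits_{r\Bnn}\green(x)\,d\lambda(x)
$$
is finite for every $r<1$ (it is at most a universal constant times $\log\tfrac{1}{1-r^2}$) and nondecreasing in $r$. Therefore $-\int_{\Bnn}\green_r(x)\,\lap\dista^2(f)(x)\,d\lambda(x)\le 2T\,B(r)\,||\dista(f)||$, and using the hypothesis $||\dista^2(f)||\ge 1$, i.e. $||\dista(f)||\ge 1$, to absorb $D''\le D''\,||\dista(f)||$, we obtain $\tfrac34 m(r)\,||\dista(f)||^2\le\big(D'+D''+2TB(r)\big)||\dista(f)||$. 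Dividing by $\tfrac34||\dista(f)||^2$ yields the lemma with
$$
\varphi_K(r)=\tfrac43\big(D'+D''+2TB(r)\big),
$$
which depends only on $K$ (since $D'$, $D''$ and $T$ do) and is increasing in $r$ (since $B$ is).

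Once the two ingredients above are recorded, the proof is pure bookkeeping, so I do not anticipate a genuine obstacle. The one point deserving attention is that $B(r)$ must be kept finite, which is precisely why the bound necessarily deteriorates as $r\to 1$; this is consistent with the statement, since (\ref{eq-osnovna-1}) only controls $\Avr_{\dista^2(f)}$ at interior radii while $\int_{\Bnn}\green(x)\,d\lambda(x)=\infty$. I would also double-check that every constant entering $\varphi_K$ --- namely $D'$, $D''$, $T$, and the universal $\const_2$ of (\ref{eq-green-est}) --- is independent of both $f$ and $r$, so that $\varphi_K$ is indeed a function of $r$ alone for each fixed $K$.
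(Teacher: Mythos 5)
Your proposal is correct and follows essentially the same route as the paper: the same split of $\Avr_{\dista^{2}(f)}(r)$ over the sets $\{\zeta:\, r\zeta\in Y_f\}$ and its complement, the same crude lower bound $\lap\dista^2(f)\ge -2T\|\dista(f)\|$ from (\ref{distance-1}), and the same substitution into (\ref{eq-osnovna-1}), yielding the identical $\varphi_K(r)=\tfrac43\big(D'+D''+2T\int_{\Bnn}\green_r\,d\lambda\big)$.
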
 

\begin{proof} For simplicity, set  
$$
Y_f(r)=\{\zeta \in \Sp^{2}:\, r\zeta \in Y_{f} \}.
$$
Thus, we need to show 
$$
\sigma\big(\Sp^2 \setminus Y_f(r)\big) \le \frac{\varphi_K(r)}{ ||\dista(f)||}.
$$

We now use the lower bound (\ref{distance-1}) 
$$
\lap \dista^2(f)(x) \ge -2\dista(f)(x)|\tau(\good(f)(x)| \ge -2||\dista(f)|| ||\tau(\good(f))||,\,\, x\in \Bnn,
$$
from Lemma \ref{lemma-distance} to bound below the Laplacian $\lap \dista^2(f)$.  Since $||\tau(\good(f)(x)||\le T$ (recall that $T$ is the constant from Definition \ref{def-admissible}), we obtain from the left hand side inequality in (\ref{eq-osnovna-1})
\begin{align*}
&||\dista(f)||^{2}\left(1-\frac{D'}{||\dista(f)||}-\frac{D''}{||\dista(f)||^2} -\frac{2T}{||\dista(f)||} \int\limits_{\Bnn} \green_r(x)\, d\lambda(x)  \right) \le \\
&\le \Avr_{\dista^{2}(f)}(r) = \int\limits_{Y_{f}(r)}  \dista^2(f)(r\zeta)\, d\sigma(\zeta) + \int\limits_{\Sp^2 \setminus Y_{f}(r)}  \dista^{2}(f)(r\zeta)\, d\sigma(\zeta) \le \\
&\le \int\limits_{Y_{f}(r)}  ||\dista(f)||^2\, d\sigma + \int\limits_{\Sp^2 \setminus Y_{f}(r)}  \frac{1}{4}||\dista(f)||^2\, d\sigma.
\end{align*}
Dividing both sides by $||\dista(f)||^{2}$, and using the identity $1=\sigma(Y_f(r))+\sigma(\Sp^2 \setminus Y_f(r))$, we obtain the inequality
$$
\sigma(\Sp^2 \setminus Y_f(r))\le \frac{4}{3||\dista(f)||} \left( D'+\frac{D''}{||\dista(f)||} +2T \int\limits_{\Bnn} \green_r(x)\, d\lambda(x) \right), 
$$
which proves the lemma for 
$$
\varphi_K(r)=\frac{4}{3}\left( D'+D'' +2T \int\limits_{\Bnn} \green_r(x)\, d\lambda(x) \right).
$$
(Here we used the assumption $||\dista(f)||\ge 1$). The function $\varphi_K(r)$ is increasing.

\end{proof}

\subsection{The main lemma} Theorem \ref{thm-main-1} essentially follows by applying the following lemma to the main inequality (\ref{eq-osnovna-1}).

\begin{lemma}\label{lemma-estimate} For every $K \ge 1$, there exists an increasing function $\psi_K(r)$, $0\le r<1$, with the following properties. For every $M>0$, there exists $0<r_0=r_0(M,K)<1$, such that assuming $||\dista(f)||\ge 1$, the inequality 
\begin{equation}\label{eq-osnovna}
\int\limits_{\Bnn} \green_{r_{1}}(x) \lap \dista^2(f)(x)\, d\lambda(x) \ge M||\dista(f)||-\psi_K(r_0),
\end{equation}
holds for some $0<r_1 \le r_0$, and every $K$-quasiconformal map $f$ satisfying (\ref{eq-norm}).
\end{lemma}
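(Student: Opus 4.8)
The plan is to bound $\Avr_{\lap\dista^2(f)}(\rho)$ from below on each sphere $\rho\Sp^2$, integrate against the radial weight producing $\int_{\Bnn}\green_{r_1}\lap\dista^2(f)\,d\lambda$, and feed the outcome to Lemma~\ref{lemma-pomocna}. The two inputs from Lemma~\ref{lemma-distance} are the crude global bound $\lap\dista^2(f)(x)\ge -2T||\dista(f)||$ (from (\ref{distance-1}), using $||\tau(\good(f))||\le T$ and $\tau(H(f))\equiv 0$) and the sharp bound (\ref{distance-2}), valid wherever $\dist(\good(f))<2K$, hence on $X_f(\epsilon_0)=X_f^{\good}(2K,\epsilon_0)$. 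First I would fix $\epsilon_0=\epsilon_0(K)>0$ small enough that on $X_f(\epsilon_0)$ the energy term $2q(2K)\,\dista(f)\,\ener(\good(f))\tanh\frac{\dista(f)}{2}$ in (\ref{distance-2}) exceeds the tension term $2\epsilon_0\dista(f)$ by a margin proportional to $\dista(f)$ once $\dista(f)\ge\frac12$; since on $Y_f$ the hypothesis $||\dista(f)||\ge 1$ gives $\dista(f)\ge\frac12||\dista(f)||\ge\frac12$, this yields a constant $c=c(K)>0$ with
\begin{equation*}
\lap\dista^2(f)(x)\ \ge\ c\,||\dista(f)||,\qquad x\in X_f(\epsilon_0)\cap Y_f,
\end{equation*}
and we fix $X_f=X_f(\epsilon_0)$ henceforth.

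Next I would pass to spherical averages. Splitting the directions at radius $\rho$ into those with $\rho\zeta\in X_f\cap Y_f$ (where the sharp bound applies) and the rest (where only the crude bound is kept), and then replacing $\sigma(\{\zeta:\rho\zeta\in X_f\cap Y_f\})$ by $\sigma(\{\zeta:\rho\zeta\in X_f\})$ at the cost of $\sigma(\{\zeta:\rho\zeta\notin Y_f\})\le\varphi_K(\rho)/||\dista(f)||$, which is exactly Lemma~\ref{lemma-measure}, I expect to obtain, for every $\rho\in[0,1)$,
\begin{equation*}
\Avr_{\lap\dista^2(f)}(\rho)\ \ge\ ||\dista(f)||\,\Avr_{\Phi(f)}(\rho)\ -\ (c+2T)\,\varphi_K(\rho),
\end{equation*}
where $\Phi(f)=\Phi(f,2T,c)$ is precisely the step function of Lemma~\ref{lemma-pomocna}: equal to $c$ on $X_f(\epsilon_0)$ and to $-2T$ off it, so that $\Avr_{\Phi(f)}(\rho)=(c+2T)\sigma(\{\zeta:\rho\zeta\in X_f\})-2T$.

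Then I would multiply this pointwise inequality by the nonnegative radial weight $3\rho^2\green_{r_1}(\rho)(1-\rho^2)^{-3}$ and integrate over $[0,r_1]$. By Lemma~\ref{lemma-average} the left side becomes $\int_{\Bnn}\green_{r_1}\lap\dista^2(f)\,d\lambda$ and the first term on the right becomes $||\dista(f)||\int_{\Bnn}\green_{r_1}\Phi(f)\,d\lambda$; the second becomes $-(c+2T)\int_0^{r_1}3\rho^2\green_{r_1}(\rho)(1-\rho^2)^{-3}\varphi_K(\rho)\,d\rho$, which, since $r_1\le r_0$, $\green_{r_1}\le\green_{r_0}$ and $\varphi_K\ge0$, is at least $-\psi_K(r_0)$ with
\begin{equation*}
\psi_K(r)\ :=\ (c+2T)\int_0^r \frac{3\rho^2\,\green_r(\rho)}{(1-\rho^2)^3}\,\varphi_K(\rho)\,d\rho,
\end{equation*}
a finite increasing function of $r$. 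Finally I would apply Lemma~\ref{lemma-pomocna} with $C_1=2T(K)$, $C_2=c(K)$: for the prescribed $M$ it supplies $r_0=r_0(M,K)$, uniform over all such $f$, with $\int_{\Bnn}\green_{r_1}\Phi(f)\,d\lambda>M$ for some $0<r_1\le r_0$; for that $r_1$ the first term exceeds $M||\dista(f)||$, which gives the assertion.

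The step I expect to be the real point is the two-flavour splitting behind the second displayed inequality. The crude bound $-2T||\dista(f)||$ is itself proportional to $||\dista(f)||$, and integrated against $\green_{r_1}$ it diverges as $r_1\to1$, so it cannot simply be absorbed into $\psi_K$. The resolution is that the bad directions separate into those with $\rho\zeta\notin X_f$, whose contribution remains attached to $||\dista(f)||$ but is rendered positive overall by choosing $r_1$ via (\ref{convergence}) through Lemma~\ref{lemma-pomocna}; and those with $\rho\zeta\notin Y_f$, whose measure carries the extra factor $1/||\dista(f)||$ from Lemma~\ref{lemma-measure}, so that their contribution loses the $||\dista(f)||$ and collapses into the fixed $\psi_K(r_0)$. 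Making these two mechanisms cooperate, with $\epsilon_0,c,T,\varphi_K,\psi_K$ all functions of $K$ alone, is the substance of the argument; the rest is bookkeeping with Lemmas~\ref{lemma-average} and~\ref{lemma-distance}.
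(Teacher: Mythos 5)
Your proposal is correct and follows essentially the same route as the paper: the same threshold $\epsilon_0(K)$ and decomposition into $X_f\cap Y_f$, $\Bnn\setminus X_f$, and $X_f\setminus Y_f$, the same step function $\Phi(f,2T,c)$ fed into Lemma~\ref{lemma-pomocna}, and the same use of Lemma~\ref{lemma-measure} to collapse the $X_f\setminus Y_f$ error (proportional to $\|\dista(f)\|$ pointwise but with measure $O(\varphi_K(\rho)/\|\dista(f)\|)$) into the fixed function $\psi_K(r_0)$. The only cosmetic difference is that you average over spheres before integrating against the radial weight, whereas the paper integrates the pointwise inequality directly via Lemma~\ref{lemma-average}; the constants and logic are identical.
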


\begin{proof} Recall the set $X_f(\epsilon)$ defined in (\ref{eq-X}). We let 
$$
\epsilon_0=\epsilon_0(K)= \frac{1}{4}q \tanh \frac{1}{4},
$$
where $q=q(2K)$ is the constant from Lemma \ref{lemma-distance}. Set
\begin{equation}\label{X-1}
X_f=X_f(\epsilon_0).
\end{equation}

From the inequality (\ref{distance-2}) in  Lemma \ref{lemma-distance},  and since $ \dista(f)(x) \ge \frac{1}{2}||\dista(f)||\ge \frac{1}{2}$, $x \in Y_f$, we get
$$
\lap \dista^2(f)(x) \ge -2\dista(f)(x) \epsilon_0 + 2q\dista(f)(x)  \tanh \frac{1}{4}, \,\, \, x \in  X_f\cap Y_f. 
$$
(Here we use  that by definition $|\tau(\good(f))(x)|\le \epsilon_0$ for $x \in X_f$.) Thus, by the choice of $\epsilon_0$, and since $ \dista(f)(x) \ge \frac{1}{2}||\dista(f)||$, $x \in Y_f$,   we get
$$
\lap \dista^2(f)(x) \ge -2||\dista(f)||\frac{1}{4}q \tanh \frac{1}{4}  + 2q\frac{1}{2}||\dista(f)||  \tanh \frac{1}{4}, \,\,\, x \in X_f\cap Y_f,
$$
which yields
\begin{equation}\label{eq-good} 
\lap \dista^2(f)(x) \ge \frac{1}{2}q||\dista(f)|| \tanh \frac{1}{4}, \,\, \, x \in X_f\cap Y_f. 
\end{equation}

As we already pointed out above, we have 
\begin{equation}\label{eq-bad} 
\lap \dista^2(f)(x) \ge -2||\dista(f)||T, \, \,  x \in \Bnn,
\end{equation}
where $T=T(K)$ is the constant from Definition \ref{def-admissible}. 

\vskip .3cm
Next,  we define the function $\Phi(f)=\Phi(f,C_1,C_2)$ (from Lemma \ref{lemma-pomocna}) by 
$$
\Phi(f)(x)=
\begin{cases}
\frac{1}{2} q \tanh \frac{1}{4}, \,\, \text{if} \,\, x \in X_f, \\
-2T,\,\,  x\in \Bnn \setminus X_f,
\end{cases}
$$
where $C_2= C_2(K)=\frac{1}{2} q \tanh \frac{1}{4}$ and $C_1=C_1(K)=2T$. (We define $\Phi(f)$ for every $f \in \QC(\Sp^{2})$ regardless of whether it has the harmonic extension $H(f)$.)

It follows from (\ref{eq-good})  that
$$
\lap \dista^2(f)(x) \ge ||\dista(f)|| \Phi(f)(x), \,\,\, \text{for}\,\,\, x \in X_f  \cap Y_f.
$$
On the other hand, from (\ref{eq-bad})  it follows  that
$$
\lap \dista^2(f)(x) \ge ||\dista(f)|| \Phi(f)(x), \,\,\, \text{for}\,\,\,  x \in \Bnn \setminus X_f.
$$
Finally, we estimate from below $\lap \dista^2(f)(x)$ on $X_f \setminus Y_f$,. Here we use the estimate that holds on the entire $\Bnn$ (which  follows from (\ref{eq-bad}) and the definition of $\Phi(f)$)
$$
\lap \dista^2(f)(x)-||\dista(f)||\Phi(f)(x) \ge -P||\dista(f)||,\,\, x \in \Bnn,
$$
where 
$$
P=2T+ \frac{1}{2} q \tanh \frac{1}{4}.
$$

The last three inequalities yield

\begin{align}\label{eq-nova-2}
&\int\limits_{\Bnn} \green_r(x) \lap \dista^2(f)(x)\, d\lambda(x)- ||\dista(f)|| \int\limits_{\Bnn} \green_r(x) \Phi(f)(x) \, d\lambda(x) \ge \\ 
\nonumber &\ge -P||\dista(f)|| \int\limits_{\Bnn\setminus Y_f} \green_r(x)\, d\lambda(x),\,\, 0\le r<1. 
\end{align}
By Lemma \ref{lemma-average}, we have
$$
\int\limits_{\Bnn\setminus Y_f} \green_r(x)\, d\lambda(x)=\int\limits_{0}^{r} \green_r(\rho) \Avr_{\chi}(\rho)\, d\rho(x),
$$
where $\chi$ is the characteristic function of the set $\Bnn \setminus Y_f$. Thus, by Lemma \ref{lemma-measure} we have the bound
\begin{equation}\label{eq-nova-11}
\int\limits_{\Bnn\setminus Y_f} \green_r(x)\, d\lambda(x) \le \frac{\varphi_K(r)}{ ||\dista(f)||} \int\limits_{0}^{r} \green_r(\rho)\, d\rho(x). 
\end{equation}

Let
$$
\psi_K(r)=P\varphi_K(r) \int\limits_{0}^{r} \green_r(\rho)\, d\rho(x),
$$
where $\varphi_K(r)$ is the increasing  function from Lemma \ref{lemma-measure}. Therefore, the function $\psi_K(r)$ is also increasing. 
Replacing (\ref{eq-nova-11}) in (\ref{eq-nova-2}) we get 
\begin{equation}\label{eq-nova-2-new}
\int\limits_{\Bnn} \green_r(x) \lap \dista^2(f)(x)\, d\lambda(x)- ||\dista(f)|| \int\limits_{\Bnn} \green_r(x) \Phi(f)(x) \, d\lambda(x) \ge -\psi_K(r).
\end{equation}

\vskip .3cm

On the other hand, from Lemma \ref{lemma-pomocna}, for every $M>0$ we can find  $r_0=r_0(M,K)$ such that 
\begin{equation}\label{eq-formula-10}
\int\limits_{\Bnn} \green_{r_{1}}(x)\Phi(f)(x) \, d\lambda(x)\ge M,
\end{equation}
for some $0< r_1 \le r_0$. Replacing (\ref{eq-formula-10}) in  (\ref{eq-nova-2-new}), and using that $\psi_K(r_0)\ge \psi_K(r_1)$,  proves the lemma.

\end{proof}

\subsection{The Endgame: A proof of (\ref{eq-new})} 
We now prove the estimate (\ref{eq-new}) for every $K$-quasiconformal map satisfying (\ref{eq-norm}), thus proving Theorem \ref{thm-main-1}.

From (\ref{eq-osnovna-1}) we get 
$$
||\dista(f)||^{2}-D' ||\dista(f)||-D'' + \int\limits_{\Bnn} \green_r(x) \lap \dista^2(f)(x)\, d\lambda(x) \le ||\dista(f)||^{2},
$$
and by subtracting $||\dista(f)||^2$ from both sides, we obtain
\begin{equation}\label{eq-kraj}
\int\limits_{\Bnn} \green_r(x) \lap \dista^2(f)(x)\, d\lambda(x) \le D'||\dista(f)||+D'' ,
\end{equation}
for every $0\le r<1$. 

Assume $||\dista(f)||\ge 1$ (otherwise we are done). Let $M=D'+1$, and let $r_0=r_0(K,M)=r_0(K)$ be the corresponding constant from Lemma \ref{lemma-estimate}.  Then
$$
\int\limits_{\Bnn} \green_{r_{1}}(x) \lap \dista^2(f)(x)\, d\lambda(x) \ge M||\dista(f)||-\psi_K(r_0)=(D'+1)||\dista(f)||-\psi_K(r_0),
$$
holds for some $0<r_1\le r_0$. Replacing this in (\ref{eq-kraj}), we get
$$
(D'+1)||\dista(f)||-\psi_K(r_0)\le D'||\dista(f)||+D'',
$$
which yields
$$
||\dista(f)|| \le \psi_K(r_0)+D''=D_1=D_1(K).
$$
This proves (\ref{eq-new}) and completes the proof of Theorem \ref{thm-main-1}.

\section{Constructing the $\good$-extension}  We prove Theorem \ref{thm-extension} by explicitly constructing $\good$.  By $\Hoo\equiv \R^2 \times (0,\infty)$, we denote the upper half subspace of $\R^3$. Every point $z \in \Hoo$ has coordinates $z=(x,t)$, where $x=(x_1,x_2) \in \R^2$ and $t>0$. We consider $\Hoo$ as  the upper-half space model of the hyperbolic 3-space $\Ho$.  By $\Isom_{\infty}(\Hoo)$ we denote all M\"obius maps of $\Hoo$ that fix $\infty$.

We let $\QC(\R^2)$ stand for the space of quasiconformal maps of $\R^2$, fixing $(0,0),(1,0), \infty$.  We construct the extension 
$$\good:\QC(\R^2) \to \big( C^2(\Hoo)\cap \QI(\Hoo) \big). 
$$
The required extension of maps from $\QC(\Sp^2)$ is then obtained by conjugating $\good$ by the M\"obius transformation that maps $\overline{\R^2}$ to $\Sp^2$ and the points $(0,0),(1,0),\infty$ to the three points on $\Sp^2$ that are fixed by maps from $\QC(\Sp^2)$.

Every map $f \in \QC(\R^2)$ is differentiable almost everywhere (with the derivative of maximal rank). If $f(x)=(u_1(x_1,x_2),u_2(x_1,x_2))$, we let
$$
\ener(f)(x)=\sum_{i,j=1}^{2} \sum_{\alpha,\beta=1}^{2} \frac{\pt u_i}{\pt x_j}(x)\frac{\pt u_\alpha}{\pt x_\beta}(x),
$$
denote the energy density of $f$ (where defined) computed with respect to the Euclidean metric.

\subsection{Harmonic extensions of linear maps} Every invertible, orientation preserving linear map $L:\R^2 \to \R^2$ can be written as
$$
L(x)=(ax_1+bx_2,cx_1+dx_2),
$$
for some real numbers $a,b,c$ and $d$,  such that $ad-bc>0$. The space of such linear mappings is denoted by  $\Lin$. Normalized maps from  $\Lin$ are in $\QC(\R^2)$ and $\ener(L)$ and $\dist(L)$ are  constant functions on $\R^2$.

Every map  $L \in \Lin$ has a harmonic quasi-isometric extension $H(L)$ to $\Hoo$. It is given by (this was computed in \cite{l-t-1}, see also \cite{t-w})
\begin{equation}\label{eq-linear}
H(L)(x,t)=\left( L(x), \sqrt{\frac{\ener(L)}{2}} t \right).
\end{equation}

Since  the norm (with respect to the hyperbolic metric) of the vector $H_{*}(v_t)$, where $v_t$ is any unit vector parallel to $\frac{\partial}{\partial t}$,  is equal to 1,  it follows that 
\begin{equation}\label{eq-last-1}
\ener(H(L))(z)>1,\,\,\, z\in \Hoo, 
\end{equation}
for every $L \in \Lin$. Also  
\begin{equation}\label{eq-last-2}
\dist(H(L))(x,t)=\dist(L)(x),\,\,\, (x,t)\in \Hoo, 
\end{equation}
and every  $L \in \Lin$. Of course,
\begin{equation}\label{eq-last-3}
\tau(H(L))(x,t)=0,\,\,\, (x,t)\in \Hoo, 
\end{equation}
since $H(L)$ is harmonic.

\subsection{The definition of $\good$} We construct $\good$ using convolution operators, and $\good$ can be seen as 
 a generalization of the Ahlfors-Beurling extension (see \cite{k-o} for a very similar extension).

We let $\good(f):\Hoo \to \Hoo$ be given by
$$
\good(f)(x,t)=\left( \int\limits_{\R^2}f(x+ty)\phi(y)\, dy_1dy_2,\,  \frac{t}{\sqrt{2}} \sqrt{ \int\limits_{\R^2}\ener(f)(x+ty)\phi(y)\, dy_1dy_2 }  \right),
$$
where $\phi$ is the Gauss kernel
$$
\phi(y)=\frac{1}{2\pi} e^{-\frac{|y|^{2}}{2} },
$$
as in \cite{k-o}.  The two integrals used to construct $\good$ are   the Gauss-Weierstrass transformations of $f$ and $\ener(f)$ respectively.

Every $K$-qc map $f \in \QC(\R^2)$ is H\"older continous which implies that 
\begin{equation}\label{nnn}
|f(x)|\le const|x|^K,\,\,\, |x|>1.
\end{equation}
This shows that the first integral in the definition of $\good(f)$ converges. 

We now show that the second integral is well defined. Let 
$$
I(y)=I(y_1,y_2)=\big( \frac{y_1}{y_{1}^2+y_{2}^2}, \frac{-y_2}{y_{1}^2+y_{2}^2}     \big),
$$
be the inversion mapping. There exists a neighborhood of infinity $\Omega \subset \R^2$ such that  $I \circ f$ maps $\Omega$ to is a bounded neighborhood of $ 0 \in \R^2$. Thus the Euclidean area of $(I \circ f)(\Omega)$ is finite and we get
$$
\int\limits_{\Omega} J(I)(f(y)) J(f)(y) \, dy_1dy_2 <\infty.
$$
Since $J(I)(y)=\frac{1}{|y|^{4}}$, and using (\ref{nnn}), we obtain 
$$
\int\limits_{\Omega} \frac{ J(f)(y)}{|y|^{4K}} \, dy_1dy_2 < \int\limits_{\Omega} \frac{J(f)(y) }{|f(y)|^{4K}}  \, dy_1dy_2 < \infty.
$$
Since $f$ is quasiconformal, the energy 
$$
\ener(f) \asymp J(f),
$$ 
is comparable with the Jacobian of $f$ (uniformly on $\R^2$) and we get 
$$
\int\limits_{\Omega} \frac{ \ener(f)(y) }{|y|^{4K}}  \, dy_1dy_2  <  \infty.
$$
It follows that the second integral in the definition of $\good$ is well defined.
\vskip .3cm

In fact, it follows from standard facts about convolution operators that $\good(f)$ is $C^{\infty}$ on $\Hoo$ and that $\good$ is conformally natural with respect to $\Isom_{\infty}(\Hoo)$, 
that is $I\circ \good(f) \circ J=\good(I \circ f \circ J)$, for $I,J \in \Isom_{\infty}(\Hoo)$. 

Clearly $\good(f)$  extends continuously to $\R^2$ (where it agrees with $f$) (see \cite{k-o}). Since $\good(f)$ is smooth and conformally natural with respect to  the transitive group $\Isom_{\infty}(\Hoo)$, by the standard compactness argument and other basic properties  of quasiconformal maps (see  any book on quasiconformal mappings) we verify that $\good$ is an admissible extension in the sense of Definition \ref{def-admissible}. We leave to the reader to check this.

Moreover, if $L \in \Lin$, then from linearity of $L$ we get
$$
\int\limits_{\R^2}L(x+ty)\phi(y)\, dy_1dy_2=L(x).
$$
Since $\ener(L)$ is a constant function  on $\R^2$, it follows 
$$  
\sqrt{ \int\limits_{\R^2}\ener(L)(x+ty)\phi(y)\, dy_1dy_2  }=\sqrt{ \ener(L) }.
$$
Thus, $\good(L)=H(L)$ is harmonic on $\Hoo$.

\subsection{The proof of Theorem \ref{thm-extension} } 
It remains to prove (\ref{convergence}) from Theorem \ref{thm-extension}. Fix $\epsilon>0$ and for a $K$-qc map $f \in \QC(\R^2)$ we let 
$$
X_f(\epsilon)=\{z \in \Hoo:\, \ener(\good(f))(z)>1,\, \dist(\good(f))(z)<2K,\, |\tau(\good(f))(z)|<\epsilon \}.
$$
We have the following lemma.

\begin{lemma}\label{lemma-last} Suppose that $f$ is differentiable (with the derivative of maximal rank) at some point $x \in \R^2$. Then there exists $t_0=t_0(f,\epsilon)>0$ such that  
$(x,t) \in X_f$, for every $0<t<t_0$.
\end{lemma}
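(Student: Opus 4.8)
The plan is to exploit the fact that as $t \to 0$ the Gauss–Weierstrass averaging in the definition of $\good(f)$ localizes around the point $x$, so that $\good(f)$ becomes, to first order, the harmonic extension $H(L)$ of the linear map $L = df(x)$, for which the three defining inequalities of $X_f$ hold by \eqref{eq-last-1}, \eqref{eq-last-2}, \eqref{eq-last-3}. Concretely, I would first fix the differentiability point $x$ and write $f(x+ty) = f(x) + t\,L(y) + t\,o(1)$ as $t \to 0$, uniformly for $y$ in any fixed compact set, where $L = df(x) \in \Lin$ (after translating we may assume $f(x)=0$ and $x=0$). The Gaussian weight $\phi$ has rapidly decaying tails and the Hölder bound \eqref{nnn} controls the contribution of large $|y|$, so one gets that the rescaled maps $g_t(y) := \tfrac{1}{t} f(x+ty)$ converge to $L$ in a strong enough sense (say, in $C^0_{loc}$ together with an $L^1_{loc}$ bound on the energy densities $\ener(g_t) = \ener(f)(x+ty) \to \ener(L)$ for a.e.\ $y$, dominated using $\ener(f)\asymp J(f)$ and the integrability proved in the construction of $\good$).

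Next I would record the scaling behavior of the relevant quantities. Since $\good$ is conformally natural with respect to $\Isom_\infty(\Hoo)$, the dilation $(x,t)\mapsto (\tfrac1t x, \tfrac1t t)$ shows that $\dist(\good(f))(x,t)$, $\ener(\good(f))(x,t)$ and $|\tau(\good(f))(x,t)|$ at the point $(x,t)$ equal the corresponding quantities for $\good(g_t)$ at the fixed point $(0,1) \in \Hoo$ (the tension field norm and energy density being computed in the hyperbolic metric, hence dilation-invariant, and the distortion being a metric quantity). So it suffices to show $\good(g_t)(0,1) \to H(L)(0,1)$ together with convergence of first and second derivatives near $(0,1)$: this would give $\ener(\good(g_t))(0,1) \to \ener(H(L))(0,1) > 1$, $\dist(\good(g_t))(0,1) \to \dist(L) < 2K$ (here one uses that $f$ is $K$-qc, so $L = df(x)$ is $K$-qc and $\dist(L) \le K < 2K$ strictly), and $|\tau(\good(g_t))(0,1)| \to |\tau(H(L))(0,1)| = 0 < \epsilon$. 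By openness of these conditions, all three hold for $t$ smaller than some $t_0 = t_0(f,\epsilon)$, which is exactly the claim.

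The convergence $\good(g_t) \to \good(L) = H(L)$ in $C^2$ near $(0,1)$ is where the real work sits, and I expect it to be the main obstacle. The first coordinate is the Gauss–Weierstrass transform of $g_t$, and since $g_t \to L$ locally uniformly with a uniform polynomial growth bound, differentiating under the integral sign (the Gaussian kernel absorbing all derivatives) gives $C^\infty_{loc}$ convergence of the first coordinate to the Gauss–Weierstrass transform of $L$, which is $L$ itself. The second coordinate is the square root of the Gauss–Weierstrass transform of $\ener(g_t)$; here one needs that $\ener(g_t) \to \ener(L)$ in $L^1_{loc}$, which follows from a.e.\ convergence plus the uniform-integrability / domination by $J(f)$ built into the construction of $\good$, and then again convolution with the smooth kernel upgrades this to $C^\infty_{loc}$ convergence of the transform (away from its zeros — but the limit $\ener(L) > 0$ is bounded below, so the square root is smooth in the limit and the convergence passes through). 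Combining the two coordinates, $\good(g_t) \to H(L)$ in $C^2$ on a neighborhood of $(0,1)$; since $\ener$, $\dist$, $|\tau|$ are continuous functions of the $2$-jet of a map into $\Hoo$ (wherever the target is in the interior, which holds since $H(L)$ is a quasi-isometry), the three strict inequalities are inherited for all small $t$, and unwinding the dilation gives $(x,t) \in X_f$ for $0 < t < t_0$.
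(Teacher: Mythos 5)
Your overall strategy is the same as the paper's: blow up at the point of differentiability, use conformal naturality under $\Isom_{\infty}(\Hoo)$ to move the three quantities at $(x,t)$ to the fixed height $1$ for the rescaled map, observe that the rescaled maps converge to $L=df(x)\in\Lin$, and conclude because $\good(L)=H(L)$ satisfies the defining conditions of $X_f$ with strict inequality, by (\ref{eq-last-1}), (\ref{eq-last-2}), (\ref{eq-last-3}). The paper runs this as a compactness/contradiction argument along a sequence $t_n\to 0$ and then simply invokes the continuity property (3) of Definition \ref{def-admissible} (pointwise convergence of normalized $K$-qc maps forces $C^2$ convergence of the extensions on compacta), which has already been asserted for $\good$ in the construction; you instead try to prove the convergence $\good(g_t)\to\good(L)$ by hand from the convolution formula. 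That is where there is a genuine gap.

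The problem is the vertical coordinate. You claim $\ener(g_t)(y)=\ener(f)(x+ty)\to\ener(L)$ for a.e.\ $y$. Differentiability of $f$ at the single point $x$ controls the difference quotients $\tfrac{1}{t}\big(f(x+ty)-f(x)\big)$, i.e.\ it gives $g_t\to L$ locally uniformly; it gives no control on the pointwise derivative $Df(x+ty)$ at nearby points, which can oscillate between genuinely different matrices on annuli shrinking to $x$ even when $f$ is differentiable at $x$. So neither the a.e.\ convergence nor the proposed domination is justified, and the convergence of $\int_{\R^2}\ener(f)(x+ty)\phi(y)\,dy$ to $\ener(L)$ --- which is a Lebesgue-point statement about $\ener(f)$ at $x$, not a consequence of differentiability there --- is precisely the content you still need to supply. (The first coordinate is fine: locally uniform convergence together with the growth bound (\ref{nnn}) does push through the Gaussian convolution and its derivatives.) To close the gap you should either cite the continuity property of the admissible extension as the paper does, or restrict to points $x$ that are additionally Lebesgue points of $\ener(f)$ --- still a full-measure set, which is all that (\ref{convergence}) requires --- and prove the convergence of the Gaussian averages there.
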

\begin{proof} Again, we use the standard compactness argument. The proof is by contradiction. 

Suppose that there exists a sequence $t_n \to 0$, such that $(x,t_n) \notin X_f(\epsilon)$. We find $I_n \in \Isom_{\infty}(\Hoo)$ such that $I_n(x,1)=(x,t_n)$, and $J_n  \in \Isom_{\infty}(\Hoo)$ such that $f_n=J_n \circ f \circ I_n$ is normalized and therefore belongs to $\QC(\R^2)$. Note that the energy, the norm of the tension field and the distortion of $\good(f)$ at $(x,t_n)$ is equal respectively to the  energy, the norm of the tension field and the distortion of $\good(f_n)$ at $(x,1)$.

Since $f$ is differentiable at $x$  (with the derivative of maximal rank), the sequence $f_n$ converges pointwise to an element $L \in \Lin$. 
Then $\good(f_n) \to \good(L)$ in $C^{\infty}$ norm (which means that all derivatives converge on compact sets in $\Hoo$). In particular, the energy, the norm of the tension field and the distortion of $\good(f_n)$ at $(x,1)$ converges respectively  to the  energy, the norm of the tension field and the distortion of $\good(L)$ at $(x,1)$. 

Since we assumed that $(x,t_n) \notin X_f$, it follows that at least one of the three equations  (\ref{eq-last-1}),  (\ref{eq-last-2}),  (\ref{eq-last-3}) does not hold for $L$. This is a contradiction  and we are finished.

\end{proof}

\vskip .3cm

To finish the proof, we conjugate the extension $\good$ to the map (also denoted by) 
$$
\good:\QC(\Sp^2) \to \big( C^2(\Bnn)\cap \QI(\Bnn) \big). 
$$
We have already established that $\good$ is an admissible extension. From Lemma \ref{lemma-last}, it follows that if a $K$-qc map  $f\in \QC(\Sp^2)$ is differentiable (with the derivative of maximal rank) at some point $\zeta_0 \in \Sp^2$, then for some $0<r_0=r_0(f,\zeta_0, \epsilon)<1$, we have 
$r\zeta_0 \in X_f(\epsilon,2K)$, for every $r_0<r<1$. The formula (\ref{convergence}) from Theorem \ref{thm-extension}  now follows from Lebesgue's Dominated Convergence Theorem and we are finished.

\end{document}